\newtheorem{theorem}{Theorem}[section]
\newtheorem{corollary}[theorem]{Corollary}
\newtheorem{proposition}[theorem]{Proposition}
\theoremstyle{definition}
\newtheorem{definition}[theorem]{Definition}
\newtheorem{remark}[theorem]{Remark}
\newtheorem{example}[theorem]{Example}
\numberwithin{equation}{section}
\newcommand{\Naturali}{{\mathbb{N}}}
\newcommand{\Complessi}{{\mathbb{C}}}
\newcommand{\Toro}{{\mathbb{T}}}
\newcommand{\I}{{\mathcal I}}
\newcommand{\Lc}{{\mathcal L}}
\newcommand{\U}{{\mathcal U}}
\begin{document}

\title{The Fourier-Stieltjes algebra of a $C^*$-dynamical system II}

\author{Erik B\'edos,
Roberto Conti\\}

\date{\today}

\maketitle
\markboth{Erik B\'edos, Roberto Conti}{
}

\renewcommand{\sectionmark}[1]{}
\begin{abstract}
 We continue our study of the Fourier-Stieltjes algebra associated to a twisted (unital, discrete) C*-dynamical system and discuss how the various notions of equivalence of such systems are reflected at the algebra-level. 
As an application, we show that the amenability of a system, as defined in our previous work, is preserved under Morita equivalence.

\vskip 0.9cm
\noindent 2020 \emph{Mathematics Subject Classification}: Primary 46L55; Secondary 37A55, 43A35, 46H25.

\smallskip
\noindent \emph{Keywords}: 
Fourier-Stieltjes algebra, $C^*$-dynamical system,
equivariant representation, cocycle conjugacy, Hilbert bimodule, Morita equivalence, amenability.
\end{abstract}

\section{Introduction} \label{Intro}

The classical notion of Fourier-Stieltjes algebra of a locally compact group $G$ \cite{Eym} was extended
in \cite{BeCo6} to a (unital, discrete) twisted C*-dynamical system $\Sigma = (A, G, \alpha, \sigma)$. In short, the outcome is a Banach algebra $B(\Sigma)$ attached to $\Sigma$ 
with a rich analytical structure
that can be better described in terms of coefficients of the so-called equivariant representations of $\Sigma$. In the case where $A$ is trivial, any such a representation is nothing but a unitary representation of $G$ on a Hilbert space, and one therefore recovers the  Fourier-Stieltjes algebra $B(G)$. Some aspects of the classical theory survive to the new setting, notably
the inclusion of $B(\Sigma)$ in the completely bounded full/reduced multipliers of $\Sigma$, as well as the fact that $B(\Sigma)$ is spanned by the $\Sigma$-positive definite functions, which themselves give rise to completely positive maps of the full and reduced twisted crossed product $C^*$-algebras associated to $\Sigma$. We note here that in the case of an untwisted system our concept of  $\Sigma$-positive definiteness can be reformulated using the notion of  completely positive Herz-Schur $\Sigma$-multiplier (cf.~\cite{MSTT}). 
We also recall that  $B(G)$ continuously embeds into $B(\Sigma)$, although these two algebras differ significantly  from each other
for it can be shown that, under mild assumptions, $B(\Sigma)$ is always noncommutative (actually, $B(G)$ is contained in the center of $B(\Sigma)$).
Finally, we mention that one can use the aforementioned coefficients of equivariant representations of $\Sigma$ to introduce suitable approximation properties for $\Sigma$, such as amenability (cf.~\cite{BeCo6}) and the Haagerup property (cf.~\cite{MSTT}),
that parallel the analogous notions for $G$ and provide intrinsic features of the dynamical system $\Sigma$.

The main motivation for this paper was to explore to which extent the Fourier-Stieltjes algebra $B(\Sigma)$ depends on $\Sigma$. We recall that if $G_1$ and $G_2$ are locally compact groups, then Walter showed in \cite{Wal} that $B(G_1)$ and $B(G_2)$ are isometrically isomorphic as Banach algebras if and only if $G_1$ and $G_2$ are topologically isomorphic. Hence one may hope that $B(\Sigma)$ is better suited to characterize $\Sigma$ than other algebras associated  to it. 
Now there are several natural notions of equivalence between two dynamical systems  $\Sigma = (A, G, \alpha, \sigma)$ 
and  $\Theta = (B, H, \beta, \theta )$, most notably exterior equivalence, conjugacy, and cocycle conjugacy, but also Morita equivalence (in the case where $G=H$). It is immediate that the first two notions are stronger than the third one,
which is itself stronger than the last one.
We show in Theorem \ref{coc-conj-isom} that $B(\Sigma)$ and $B(\Theta)$ are isometrically isomorphic whenever $\Sigma$ and $\Theta$ are cocycle conjugate (up to a group isomorphism), 
in a way that preserves 
the classical Fourier-Stieltjes algebras of the corresponding groups, and also the canonical copies of the corresponding algebras.        
In connection with this result, we also note that the Fourier-Stieltjes algebra of a system does not detect a perturbation of the system by a  $\Toro$-valued group $2$-cocycle, cf.~Remark \ref{coc-pert2}.
In the case of Morita equivalent systems, the connection between the Fourier-Stieltjes algebras remains somewhat more elusive, but we are at least able to show that these algebras can be determined from each other, see Corollary \ref{M-eq-FS}.
However, as a byproduct of this study, we obtain an interesting consequence for Morita equivalent systems, namely we show in Theorem \ref{Amen} that the amenability of a system  (as defined in \cite{BeCo6}) is preserved
under such an equivalence.

 The paper is organized as follows. After some preliminaries in Section 2, we review in Section 3 some of the natural notions of equivalence for twisted $C^*$-dynamical systems (exterior equivalence, (group) conjugacy, and cocycle (group) conjugacy) and prove that the Fourier-Stieltjes algebra is invariant, up to isometric isomorphism, under cocycle group conjugacy (which is the most general among  these notions). In Section 4 we consider two Morita equivalent systems and point out that there is, up to isomorphism, a one-to-one correspondence between the equivariant representations of the respective systems. We use this to show that the corresponding Fourier-Stieltjes algebras can then be recovered from each other. Finally, in Section 5, we recall our definition of amenability for a system and show that this property is Morita invariant.      

\section{Preliminaries}\label{Preliminaries} 

We only consider \emph{unital}  $C^*$-algebras in this paper, and a homomorphism 
between two such algebras will always mean a unit preserving $*$-homomorphism.  Isomorphisms and automorphisms between $C^*$-algebras are therefore 
 also assumed to be $*$-preserving. The group of unitary elements in a $C^*$-algebra $A$ will be denoted by $\U(A)$, the center of $A$ by $Z(A)$, and the group of automorphisms of $A$  by ${\rm Aut}(A)$. The identity map on $A$ will be denoted by ${\rm id}$ (or ${\rm id}_A$). 
 If $B$ is another $C^*$-algebra, $A\otimes B$ will denote their minimal tensor product.

By a Hilbert $C^*$-module, we will always mean a  \emph{right} Hilbert $C^*$-module, unless otherwise specified, and follow the notation introduced in \cite{La1}. In particular, all inner products will be assumed to be linear in the second variable, $\Lc_B(X, Y)$  will denote the space of all adjointable operators between two Hilbert $C^*$-modules $X$ and $Y$ over a $C^*$-algebra $B$, and $\Lc_B(X) = \Lc_B(X,X)$. A representation of a $C^*$-algebra $A$ on a Hilbert $B$-module $Y$ is then a homomorphism from $A$ into the $C^*$-algebra $\Lc_B(Y)$. If $Z$ is another Hilbert $C^*$-module (over $C$), we will let $\pi \otimes \iota : A  \to \Lc_{B\otimes C}(Y \otimes Z)$ denote the amplified representation of $A$ on $Y \otimes Z$ given by $(\pi \otimes \iota)(a) = \pi(a) \otimes I_Z$, where the Hilbert $B\otimes C$-module $Y \otimes Z$ is the external tensor product of $Y$ and $Z$ (cf.~\cite{La1}), and $I_Z$ denotes the identity operator on $Z$.  If $Z$ is a Hilbert space,  then we  consider $Y\otimes Z$ as a Hilbert $B$-module.

The quadruple $\Sigma = (A, G, \alpha,\sigma)$ will always denote a \emph{twisted 
unital discrete
$C^*$-dynamical system}.  This means that
$A$ is a  $C^*$-algebra with unit $1_A$, 
$G$ is a discrete group with identity $e$
and $(\alpha,\sigma)$ is a \emph{twisted
action} of $G$ on $A$ (sometimes called a cocycle $G$-action on $A$), that is,
$\alpha$ is a map from $G$ into ${\rm Aut}(A)$ 
and  $\sigma: G \times G \to \U(A)$ is a normalized $2$-cocycle for $\alpha$, such that
\begin{align*}
\alpha_g  \alpha_h & = {\rm Ad}(\sigma(g, h))   \alpha_{gh}, \\
\sigma(g,h) \sigma(gh,k) & = \alpha_g(\sigma(h,k)) \sigma(g,hk), \\
\sigma(g,e) & = \sigma(e,g) = 1_A  
\end{align*}
for all $g,h,k \in G$. Of course,  ${\rm Ad}(u)$ denotes here the (inner) automorphism  of $A$ implemented by the unitary $u$ in $\U(A)$.
If $\sigma=1$ is the trivial $2$-cocycle, that is, $\sigma(g,h)=1_A$ for all $g,h \in G$, then $\alpha$ is a genuine action and $\Sigma$ is an ordinary $C^*$-dynamical system (see e.g.\ \cite{Wi, BrOz}), usually denoted by $\Sigma=(A, G, \alpha)$.  If $\sigma$ is \emph{central}, that is, it takes values in  $\U(Z(A))$, then $\alpha$ is also a genuine action of $G$ on $A$, and  this is the case studied in \cite{ZM}. In the sequel we will often just use the word system to mean a discrete unital twisted $C^*$-dynamical system.

An \emph{equivariant representation} 
of $\Sigma$ on a Hilbert $A$-module $X$ (see e.g.~\cite{BeCo3, BeCo4}) 
is  a pair $(\rho, v)$ where 
 $\rho : A \to \Lc_A(X)$ is a representation of $A$ on $X$ and  $v$ is a map from $G$ into the group $\mathcal{I}(X)$  of all $\mathbb{C}$-linear, invertible, bounded maps from $X$ into itself, which  satisfy:
\begin{itemize}
\item[(i)]  \quad $\rho(\alpha_g(a))  = v(g)  \rho(a)  v(g)^{-1}, \quad  \quad g\in G,   a \in A,$
\item[(ii)]  \quad$v(g) v(h)  = {\rm ad}_\rho(\sigma(g,h))  v(gh), \quad   \quad g, h \in G,$
\item[(iii)] \quad $\alpha_g\big(\langle x, x' \rangle\big)   = \langle v(g) x, v(g) x' \rangle , \quad  \quad g\in G,  x,  x' \in X,$ 
\item[(iv)]  \quad$v(g)(x \cdot a)  = (v(g) x)\cdot \alpha_g(a), 
\quad  \quad  g \in G, x\in X, a \in A$. 
\end{itemize}
In (ii) above, $ {\rm ad}_\rho(\sigma(g,h)) \in \mathcal{I}(X) $ is defined by
\[{\rm ad}_\rho(\sigma(g,h)) x = \big(\rho(\sigma(g,h)) x \big)\cdot \sigma(g,h)^*, \quad g, h \in G, x \in X. \]
Note that the equivariant representations of $\Sigma$ may  instead be presented in terms of $(\Sigma$,$\Sigma$)-compatible actions, as in \cite{EKQR-0, EKQR}, cf.~Remark \ref{equirep-action}. 
Note also that condition (iii) implies that each $v(g)$ is isometric.

For completeness, we mention some examples of equivariant representations. 
First, the  \emph{trivial equivariant representation} of $\Sigma$, which is the pair $(\ell, \alpha)$ acting on $A$, considered as a right $A$-module over itself in the canonical way, where $\ell:A\to \Lc_A(A)$ is given by left-multiplication. 
Next, let $A^G := \ell^2(G,A)$ denote the right $A$-module given by 
\[A^G = \Big\{\xi:G\to A \mid  \sum_{g\in G}   \xi(g)^{*} \xi(g) \text{ is norm-convergent in $A$}\Big\},\] with the obvious right $A$-module structure, and inner product given by 
\[\langle \xi, \eta\rangle = \sum_{g\in G}  \xi(g)^* \eta(g). \]  Then the \emph{regular equivariant representation} of $\Sigma$ on $A^G$ is the pair 
$(\check{\ell}, \check{\alpha})$  acting on $A^G$ defined by
 \[(\check{\ell}(a) \xi)(h) = a \xi(h), \quad 
(\check{\alpha}(g)\xi)(h) =  \alpha_g(\xi(g^{-1}h)) \]
for $a \in A,  \xi \in A^G$ and $ g, h \in G$.

More generally, if $(\rho, v)$ is an equivariant  representation of $A$ on a right Hilbert $A$-module $X$ and $w$ is a unitary representation of $G$ on some         
Hilbert space $\mathcal H$, then $(\rho \otimes \iota, v \otimes w)$ is an
equivariant representation of $\Sigma$ on $X \otimes {\mathcal H}$.

 One can also form the tensor product of equivariant representations. 
Assume that $(\rho_1,v_1)$ and $(\rho_2,v_2)$ are equivariant representations 
 of $\Sigma$ on some Hilbert $A$-modules 
$X_1$ and $X_2$, respectively.
We can then form the internal tensor product  $X_1\otimes_{\rho_2} Y$, which is a right Hilbert $A$-module (cf.~\cite{La1}); we will suppress $\rho_2$ in our notation and denote $X_1\otimes_{\rho_2} X_2$ by $X_1\otimes_A X_2$, as it is quite common in the literature. 
Then the tensor product
$(\rho_1,v_1) \otimes (\rho_2,v_2)$
acts on 
$X_1 \otimes_{A} X_2$ 
as follows.
For  $a \in A$, let $(\rho_1 \otimes \rho_2)(a)\in \Lc_A(X_1 \otimes_{A} X_2) $ be the map determined on simple tensors by
\[(\rho_1 \otimes \rho_2)(a) (x_1 \dot\otimes x_2) = \rho_1(a)x_1 \dot\otimes  x_2 \quad \text{for } x_1 \in X_1  \text{and } x_2 \in X_2. \]
Moreover, for every $g \in G$, let $(v_1 \otimes v_2)(g)$ in $\I(X_1 \otimes_{A} X_2)$ be the map determined on simple tensors by
\[(v_1 \otimes v_2)(g) (x_1 \dot\otimes x_2) = v_1(g)x_1 \dot\otimes v_2(g)x_2\quad  \text{for } x_1 \in X_1\text{  and } x_2 \in X_2 . \]
Then  $(\rho_1,v_1)\otimes(\rho_2,v_2):=(\rho_1 \otimes \rho_2, v_1 \otimes v_2)$ is an equivariant representation of $\Sigma$ on the right Hilbert $A$-module $X_1 \otimes_{A} X_2$ (cf.~\cite{EKQR, BeCo3}).

 Let  $(\rho,v)$ be an equivariant representation of $\Sigma$ on a Hilbert $A$-module $X$ and let $x, y \in X$.    
Then we define $T_{\rho,v,x,y}:G\times A \to A$ by
\[T_{\rho,v,x,y}(g,a) = \big\langle x,  \rho(a)  v(g)  y \big \rangle  \quad \text{for } a \in A,  g \in G,\] 
and think of $T_{\rho, v, x, y}$ as an  $A$-valued coefficient function associated with $(\rho, v)$. 

 The \emph{Fourier-Stieltjes algebra} $B(\Sigma)$
 is defined in \cite{BeCo6} as the collection of all 
the  maps from $G\times A$ into $A$
    of the form $T_{\rho, v, x, y}$ for some equivariant 
representation $(\rho,v)$ of $\Sigma$ on a Hilbert $A$-module $X$ and  $x, y \in X$. 
 Then $B(\Sigma)$  becomes a unital subalgebra of $L(\Sigma)$, where
 \[ L(\Sigma) =\{ T:G\times A \to A\mid T \text{ is linear in the second variable}\} \]
is equipped with its natural algebra structure:
for $T, T' \in L(\Sigma)$ and $\lambda \in \Complessi$,  we let 
$T+T',  \lambda T$, $ T\cdot T'$  and $I_\Sigma$ be the maps in $L(\Sigma)$  defined 
by 
\begin{align*}
(T+T')(g,a) & := T(g,a)+T'(g,a) \\
(\lambda T) (g,a) & := \lambda  T(g,a) \\
(T \cdot T')(g,a) & := T(g,T'(g,a)) \\
I_\Sigma(g,a) & := a
\end{align*}
for $g\in G$ and $a \in A$. Given $T\in L(\Sigma)$ and $g \in G$, we will sometimes write $T_g$ for the linear map from $A$ into itself given by $T_g(a)=T(g, a)$ for all $a\in A$.

If $T\in B(\Sigma)$, letting  $\|T\|$ denote the infimum of  the set of values $\|x\| \|y\|$ associated with the possible decompositions of $T$ of the form  $T = T_{\rho, v, x,y}$, one gets a norm on $B(\Sigma)$ such that   
 $B(\Sigma)$ is a unital Banach algebra w.r.t.~$\|\cdot\|$.
 
We also recall that there is a canonical way of embedding $B(G)$ into $B(\Sigma)$ (cf.~\cite[Proposition 3.2]{BeCo6}):
 For $f \in B(G)$, define $T^f \in L(\Sigma)$ by $T^f(g, a) = f(g)  a$ for $g\in G$ and $a \in A$. 
Then $T^f \in B(\Sigma)$, and the map $f \to T^f $ gives an injective, contractive, algebra-homomorphism of $B(G)$ into $B(\Sigma)$.

The Fourier-Stieltjes algebra $B(\Sigma)$ also contains a copy of  $A$. Indeed, for $b \in A$, let $T^b \in L(\Sigma)$ be given by $T^b(g,a) = ba$ for all $g\in G$ and $a\in A$. Then we have that $T^b = T_{\ell, \alpha, b^*\!, 1_A} \in B(\Sigma)$ and $\|T^b\| \leq \|b\|$. From this, one readily deduces that the map $b\to T^b$ gives an 
isometric algebra-homomorphism from $A$ into $B(\Sigma)$.

Finally, we recall that, as in the classical case, $B(\Sigma)$ is spanned by its positive definite elements (cf.~\cite[Corollary 4.5]{BeCo6}). For the ease of the reader, we review how positive definiteness is defined in our setting.  Let $T\in L(\Sigma)$. 
Then $T$ is {called \it positive definite} (w.r.t.~$\Sigma$), or \emph{$\Sigma$-positive definite}, when for any $n\in \Naturali$, $g_1, \ldots, g_n \in G$ and $a_1, \ldots, a_n \in A$, the matrix
  \[\Big[ \alpha_{g_i}\Big(T_{g_i^{-1}g_j}\big(\alpha_{g_i}^{-1}\big(a_i^*a_j\sigma(g_i, g_i^{-1}g_j)^*\big)\big)\Big) \sigma(g_i, g_i^{-1}g_j)\Big]\]
is positive in $M_n(A)$ (the $n\times n$ matrices over $A$). 
As shown in \cite[Corollary 4.4]{BeCo6}, which is an analogue of the Gelfand-Raikov theorem, this is  equivalent to requiring that
 $T$ may be written as $T = T_{\rho, v, x, x}$ for some equivariant representation $(\rho, v)$ of $\Sigma$ on some Hilbert $A$-module $X$ and some $x\in X$. It then follows that 
 \[\|T\|_\infty:=\sup \{\|T_g\|\mid g\in G\} = \|T_e(1_A) \|=  \|\langle x, x\rangle_A\|\] (cf. \cite[Corollary 4.3]{BeCo6}).
We set 
\[P(\Sigma) =\big\{T\in L(\Sigma)\mid T  \text{is positive definite $($w.r.t. }  \Sigma )\big\}.\]

\section{Cocycle group conjugate systems}

There are various notions of equivalence for $C^*$-dynamical systems in the literature.
In this section we will study how the notions of exterior equivalence, (group) conjugacy and cocycle (group) conjugacy  are reflected at the level of the Fourier-Stieltjes algebras. 

\begin{definition}
Consider a system $\Sigma = (A, G, \alpha, \sigma)$,  
and let $w:G\to \mathcal{U}(A)$ be a normalized map, that is, such that $w(e)=1_A$. Then it is well known (cf.~\cite[Section 3]{PaRa}) that we get another twisted action $(\alpha^w, \sigma^w)$ of $G$ on $A$ by setting
\[\alpha_g^w = {\rm Ad}(w(g)) \circ \alpha_g \quad \text{and} \quad \sigma^w(g,g') = w(g)\alpha_g(w(g'))\sigma(g,g') w(gg')^*\]
for all $g, g' \in G$.  We then set $\Sigma^w:= (A, G, \alpha^w, \sigma^w)$ and call $\Sigma^w$ a \emph{perturbation of $\Sigma$ by $w$}. 
\end{definition}

\begin{remark} \label{coc-pert} 
Another way to perturb a system $\Sigma = (A, G, \alpha, \sigma)$ 
is as follows. Let $\alpha'$ denote the restriction of $\alpha$ to a (genuine) action of $G$ on $Z(A)$, 
and let $\eta : G\times G \to \mathcal{U}(Z(A))$ be a normalized $2$-cocycle for $\alpha'$. 
(For example, 
 we can let $\eta : G\times G \to \Toro$ be any normalized $2$-cocycle for the group $G$ and consider $\eta$ as a $2$-cocycle for $\alpha'$.)
Then we get a twisted action $(\alpha , \sigma_\eta )$ of $G$ on $A$ by setting 
\[ \sigma_\eta (g, g') := \sigma(g,g')\eta (g, g')\]
for all $g, g' \in G$. The system $\Sigma(\eta) := (A, G, \alpha, \sigma_\eta )$ is called a \emph{perturbation of $\Sigma$ by $\eta $}. 
\end{remark} 

\begin{definition} 
Two systems $\Sigma = (A, G, \alpha, \sigma)$ and $\Theta =(A, G, \beta, \theta )$ 
 are called \emph{exterior equivalent}, and we write  $\Sigma \sim_e  \Theta$, when $\Theta  = \Sigma^w$ for some map $w:G\to \mathcal{U}(A)$ (which is then necessarily normalized). 
\end{definition}

\begin{example} \label{ext-eq} 
Let  $\alpha$ and $\beta$ be two genuine actions of $G$ on $A$ and set $\Sigma = (A, G, \alpha, 1)$ and $\Theta =(A, G, \beta, 1)$. We recall that a map $w: G\to \mathcal{U}(A)$ is called a \emph{$1$-cocycle for $\alpha$} when it satisfies that  $w(gg') = w(g)\alpha_g(w(g'))$ for all $g, g'\in G$. Then we have that  $ \Sigma  \sim_e \Theta$ if and only if there exists some $1$-cocycle $w: G\to \mathcal{U}(A)$ for $\alpha$ such that $\beta_g = {\rm Ad}(w(g)) \circ \alpha_g $ for all $g\in G$. One usually says that $\beta$ is a perturbation of $\alpha$ by $w$ in this case. 

Assume now that $\alpha$ and $\beta$
agree up to inner automorphisms, that is, they satisfy that $\beta_g = {\rm Ad}(u(g)) \circ \alpha_g$ for some map $u:G\to \mathcal{U}(A)$, which may be assumed to be normalized. Set \[\partial u(g, h):= u(g)\alpha_g(u(h))u(gh)^*\] for all $g, h \in G$. Then it can easily be checked that $\partial u$ is a $2$-cocycle for $\beta$ taking its values in $\mathcal{U}(Z(A))$. If $\partial u \neq 1$, i.e., $u$ is not a $1$-cocycle for $\alpha$, then we get that $(\beta, \partial u)$ 
is a twisted action of $G$ on $A$ satisfying  that 
$\Sigma =  (A, G, \alpha, 1) \sim_e (A, G, \beta, \partial u)$. Similarly, 
$\Theta \sim_e (A, G, \alpha, \partial u^*)$, where $u^*(g):=u(g)^*$ for all $g \in G$.

We note that if the map $u$ above takes its values in $ \mathcal{U}(Z(A))$ (so we have $\beta=\alpha$), and $\alpha'$ denotes the restriction of $\alpha$ to an action of $G$ on $Z(A)$, then $\partial u$ is a normalized $2$-cocycle for $\alpha'$ (called a \emph{coboundary} for $\alpha'$). A perturbation of $\Sigma$ by $u$ is then clearly the same as a perturbation of $\Sigma$ by $\partial u$ (in the sense of Remark \ref{coc-pert}), i.e., we have $ \Sigma^u = \Sigma(\partial u) $, and we get that $\Sigma \sim_e \Sigma(\partial u)$ in this case.
\end{example}
Next, consider $\Sigma = (A, G, \alpha, \sigma)$ and note that if $\phi:A\to B$ is an isomorphism of $C^*$-algebras and $\varphi:G\to H$ is an isomorphism of groups, then 
we get a new system $\Theta = (B, H, \beta, \theta )$ by setting
\[\beta_{h}= \phi \circ \alpha_{\varphi^{-1}(h)} \circ \phi^{-1} \quad \text{and} \quad
\theta (h, h') = \phi\big(\sigma(\varphi^{-1}(h), \varphi^{-1}(h'))\big)\]
for all $h, h' \in H$. This motivates the following notion.

\begin{definition} Two systems $\Sigma = (A, G, \alpha, \sigma)$ and $\Theta =(B, H, \beta, \theta )$ are said to be \emph{group conjugate} if there exist an 
isomorphism $\phi:A\to B$ and an  isomorphism $\varphi:G\to H$ such that
\begin{itemize}
\item[(i)] $\beta_{\varphi(g)}= \phi \circ \alpha_g \circ \phi^{-1}$,
\item[(ii)] $\theta \big(\varphi(g), \varphi(g')\big) = \phi\big(\sigma(g, g')\big)$
\end{itemize}
for all $g, g' \in G$, in which case we write $ \Sigma \sim_{gc}  \Theta$.
In the case where $H=G$, we will say that $\Sigma$ and $\Theta $ are \emph{conjugate},  and write $ \Sigma \sim_{c} \Theta$, if $\varphi$ can be chosen to be the identity map.
\end{definition}

\begin{definition} Two systems $\Sigma = (A, G, \alpha, \sigma)$ and $\Theta =(B, H, \beta, \theta )$ are said to be 
\emph{ cocycle group conjugate} if $\Sigma^w  \sim_{gc}   \Theta$ for some normalized $w:G\to \mathcal{U}(A)$, in which case we write $ \Sigma \sim_{cgc} \Theta $.
Equivalently, as one readily checks,  $  \Sigma \sim_{cgc} \Theta$ if and only if $\Theta $ is exterior equivalent to some 
group conjugate of $\Sigma$.
In the case where $H=G$, we will say that $\Sigma$ and $\Theta $ are \emph{cocycle conjugate}, and write $ \Sigma \sim_{cc} \Theta $, 
if  $\Sigma^w$ is conjugate to $\Theta $  for some normalized $w:G\to \mathcal{U}(A)$.
\end{definition}

Discarding  set-theoretical problems, one may show without much trouble that $\sim_{cgc}$ (resp.~$\sim_{cc}$) satisfies the properties of an equivalence relation. Moreover, it is evident from the definitions that (group) conjugacy
and exterior equivalence
are stronger notions than cocycle (group) conjugacy.

\begin{example}
\label{excocycony}
Assume again $\alpha$ and $\beta$ are genuine actions of $G$ on $A$. Then 
we have $(A, G, \alpha, 1) \sim_{cc}  (A, G, \beta, 1)$ if and only if $(A, G, \alpha^w, 1^w) \sim_{c} (A, G, \beta, 1)$ for some normalized $w:G\to\mathcal{U}(A)$, in which case we get 
$1 = 1^w(g,g')
= w(g) \alpha_g(w(g')) w(gg')^*$ for all $g\in G$, so that $w$ is a $1$-cocycle for $\alpha$. Hence $(A, G, \alpha, 1) \sim_{cc} (A, G, \beta, 1) $ if and only if there is a perturbation of $\alpha$ by a $1$-cocycle for $\alpha$ which is conjugate to $\beta$, i.e., $\alpha$  is cocycle conjugate to $\beta$  (as defined for example in \cite[II.10.3.18]{Bl}).
\end{example}

It is part of the folklore
that the $C^*$-crossed products associated to cocycle conjugate systems are isomorphic, both in the full and in the reduced case, 
via an isomorphism that preserves the ``diagonal'' algebra 
(for partial results in this direction, see e.g.~\cite[Lemma 3.2]{PaRa} and \cite[Lemma 2.68]{Wi}). 
In our setting, we have:

\begin{theorem} \label{coc-conj-isom}
Assume $\Sigma = (A, G, \alpha, \sigma)$ 
and  $\Theta =(B, H, \beta, \theta )$ are cocycle group conjugate. 
Then $B(\Sigma)$ and $B(\Theta )$ are isometrically isomorphic.

 More precisely, there exists an algebra-isomorphism $\Psi: B(\Theta) \to B(\Sigma)$ such that
\begin{itemize}
\item[1$)$] $\Psi$ is isometric$;$
\item[2$)$] $\Psi$ maps the copy of $B(H)$ inside $B(\Theta)$ isometrically onto the copy of $B(G)$ inside $B(\Sigma)$  $($w.r.t.~the norms of $B(G)$ and $B(H)$$)$$;$
\item[3$)$] $\Psi$ restricts to an isomorphism from the copy of $B$ inside $B(\Theta)$ onto the copy of $A$ inside $B(\Sigma)$, and the associated map from $B$ to $A$ is $*$-preserving $($hence isometric$)$.
\end{itemize}
\end{theorem}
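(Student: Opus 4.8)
The plan is to reduce the claim to two separate statements—invariance under exterior equivalence and invariance under group conjugacy—and then compose them, since by definition $\Sigma \sim_{cgc} \Theta$ means $\Theta$ is group conjugate to some perturbation $\Sigma^w$. For each of the two building blocks I will construct an explicit bijection at the level of equivariant representations, check it carries coefficient functions to coefficient functions, and verify that the induced map on $B(\cdot)$ is an isometric algebra isomorphism with the stated compatibility with the copies of $B(G)$ and of $A$. The final $\Psi$ is the composition of the two, and since both ingredients will be shown to preserve the relevant sub-copies, so does the composite.

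\textbf{Group conjugacy.} Suppose $\Theta = (B,H,\beta,\theta)$ is group conjugate to $\Sigma=(A,G,\alpha,\sigma)$ via $\phi:A\to B$ and $\varphi:G\to H$. Given an equivariant representation $(\rho,v)$ of $\Sigma$ on a Hilbert $A$-module $X$, I will turn $X$ into a Hilbert $B$-module $X_\phi$ by transporting the $A$-action and inner product through $\phi^{-1}$ (i.e.\ $x\cdot b := x\cdot\phi^{-1}(b)$ and $\langle x,y\rangle_B := \phi(\langle x,y\rangle_A)$), and set $\tilde\rho := \phi\circ\rho\circ\phi^{-1}$ (read appropriately as a representation of $B$ on $X_\phi$) and $\tilde v(h) := v(\varphi^{-1}(h))$. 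One checks directly that $(\tilde\rho,\tilde v)$ is an equivariant representation of $\Theta$, using conditions (i)--(iv) for $(\rho,v)$ together with the conjugacy relations for $\beta$ and $\theta$; the only mildly delicate point is that ${\rm ad}_{\tilde\rho}(\theta(h,h'))$ matches ${\rm ad}_\rho(\sigma(\varphi^{-1}(h),\varphi^{-1}(h')))$, which follows from $\theta(\varphi(g),\varphi(g'))=\phi(\sigma(g,g'))$ and the definition of the transported module structure. Then for $x,y\in X$ one has the identity $T_{\tilde\rho,\tilde v,x,y}(h,b)=\phi\big(T_{\rho,v,x,y}(\varphi^{-1}(h),\phi^{-1}(b))\big)$, so the assignment $T\mapsto \big((h,b)\mapsto \phi(T(\varphi^{-1}(h),\phi^{-1}(b)))\big)$ is a well-defined bijection $B(\Sigma)\to B(\Theta)$; it is clearly multiplicative for the product in $L(\cdot)$, sends $I_\Sigma$ to $I_\Theta$, and is isometric because $\|x\|\|y\|$ is unchanged (the $C^*$-norm on $X_\phi$ equals that on $X$, $\phi$ being an isometry). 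Taking inverses gives the isomorphism in the direction $\Psi_0:B(\Theta)\to B(\Sigma)$. Compatibility with $B(H)$ and $B(G)$: under $\varphi$ one has the classical isometric isomorphism $B(H)\to B(G)$, $f\mapsto f\circ\varphi$, and a short computation shows $\Psi_0(T^f)=T^{f\circ\varphi}$ for $f\in B(H)$, since $\phi(1_A)=1_B$. Similarly $\Psi_0(T^b)=T^{\phi^{-1}(b)}$ for $b\in B$, and $b\mapsto\phi^{-1}(b)$ is a $*$-isomorphism, hence isometric.

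\textbf{Exterior equivalence.} Suppose $\Theta=(A,G,\alpha^w,\sigma^w)$ with $w:G\to\U(A)$ normalized. Here the underlying $C^*$-algebra and group are literally the same, so I will keep the module $X$ and the representation $\rho$ fixed and only modify the $G$-part: given $(\rho,v)$ equivariant for $\Sigma$, set $v^w(g):={\rm ad}_\rho(w(g))\,v(g)$, where ${\rm ad}_\rho(w(g))x=(\rho(w(g))x)\cdot w(g)^*$ as in the paper's notation. One verifies that $(\rho,v^w)$ is equivariant for $\Theta$: condition (i) holds because conjugating by the unitary factor ${\rm ad}_\rho(w(g))$ exactly implements the extra ${\rm Ad}(w(g))$ in $\alpha^w_g$; condition (iii) and (iv) hold because ${\rm ad}_\rho(w(g))$ is an isometry intertwining the module structure up to ${\rm Ad}(w(g))$ on coefficients, which combines with property (iii)/(iv) for $v$ to give the $\alpha^w$-versions; and condition (ii) with $\sigma^w$ reduces, after expanding $v^w(g)v^w(h)$, to the $2$-cocycle identity for $(\rho,v)$ together with the defining formula for $\sigma^w$—this is the computational heart of this step. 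Conversely $v=({\rm ad}_\rho(w))^{-1}v^w$ recovers $(\rho,v)$, so the correspondence is bijective. Now the coefficient function transforms as $T_{\rho,v^w,x,y}(g,a)=\big\langle x,\rho(a){\rm ad}_\rho(w(g))v(g)y\big\rangle = T_{\rho,v,x,y'}(g,a')$ for suitably reindexed data, but more usefully: fixing $x,y$, one gets $T_{\rho,v^w,x,y}(g,a)=\langle x,\rho(a)\rho(w(g))v(g)y\rangle\, w(g)^*$, so the map $\Psi_1:B(\Sigma)\to B(\Theta)$, $(\Psi_1 T)(g,a):=\big(\text{the } w\text{-twist of }T\big)$ defined accordingly, is a bijection; checking it is a ring isomorphism of $L$-products and isometric is then routine, and it fixes each $T^f$ ($f\in B(G)$) because the $w$-twist is trivial on those (the $w(g)$ and $w(g)^*$ cancel against the scalar $f(g)$), and fixes each $T^b$ up to the identity on $A$.

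\textbf{Conclusion and main obstacle.} Composing, $\Psi:=\Psi_0\circ(\Psi_1$ in reverse$)$ gives an isometric algebra isomorphism $B(\Theta)\to B(\Sigma)$ with properties 1)--3); I will state the combined formula explicitly so that the reader can see the copies of $B(H)$, $A$, $B$ are tracked correctly through the composition. I expect the main obstacle to be purely bookkeeping rather than conceptual: verifying that the modified pairs $(\tilde\rho,\tilde v)$ and $(\rho,v^w)$ genuinely satisfy all four axioms of an equivariant representation with the \emph{new} data $(\alpha^w,\sigma^w)$ resp.\ $(\beta,\theta)$—in particular getting the ${\rm ad}_\rho$-correction terms to line up in axiom (ii)—and then tracking, without sign or inverse errors, how $T_{\rho,v,x,y}$ is transformed and that the transformation respects the noncommutative product $(T\cdot T')(g,a)=T(g,T'(g,a))$ of $L(\Sigma)$. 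The isometry statement is easy once the bijection on decompositions $T=T_{\rho,v,x,y}$ is set up, since none of the constructions changes $\|x\|$, $\|y\|$, or the $C^*$-norms involved.
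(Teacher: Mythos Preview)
Your proposal is correct and follows essentially the same approach as the paper: both reduce to the two cases of group conjugacy and exterior equivalence, transport equivariant representations through $\phi,\varphi$ in the first case, and replace $v$ by $v^w(g)={\rm ad}_\rho(w(g))v(g)$ in the second, obtaining the explicit formula $[\Pi(T)](g,a)=T(g,aw(g))w(g)^*$ for the induced map on $B(\Sigma)$. The paper carries out in full the verification of axioms (i)--(iv) for $(\rho,v^w)$ that you flag as the bookkeeping obstacle, but otherwise the structure and the checks of properties 1)--3) are the same.
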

\begin{proof}
It clearly suffices to prove the result in the two separate cases where $\Sigma$ and $\Theta$ are group conjugate or exterior equivalent.

 Assume first that $\Sigma \sim_{gc} \Theta $ via isomorphisms $\phi:A\to B$ and $\varphi:G\to H$. Then 
 the reader should have no trouble in verifying 
that the map
$\Psi: B(\Theta )\to B(\Sigma)$ given by
\[[\Psi(S)](g,a) = \phi^{-1}\big(S(\varphi(g), \phi(a))\big)\]
for $S \in B(\Theta )$, $g \in G$ and $ a\in A$, is a well-defined  algebra-isomorphism
satisfying 1), 2) and 3).

 Next, assume that $\Sigma$ and $\Theta$ are exterior equivalent, so we have $\Theta  = \Sigma^w$ for some normalized map $w:G\to \mathcal{U}(A)$, where  $\Sigma^w= (A, G, \alpha^w, \sigma^w)$. Noting that $L(\Sigma^w)=L(\Sigma)$, it is straightforward to check that the map
$\Pi: L(\Sigma)\to L(\Sigma^w)$ given by
\[[\Pi(T)](g,a) = T\big(g, aw(g)\big)w(g)^*\]
for $T \in L(\Sigma)$, $g \in G$ and $ a\in A$, is an algebra-isomorphism. 

 Now, let $T \in B(\Sigma)$, so $T=T_{\rho, v, x, y}$ for some equivariant representation $(\rho,v)$ of $\Sigma$ on a Hilbert $A$-module $X$ and  $x, y \in X$. Then set $\widetilde\rho = \rho$ and define  $\widetilde{v}:G \to \mathcal{I}(X)$ by $\widetilde{v}(g) = {\rm ad}_\rho(w(g))  v(g)$, i.e., for each $g\in G$, 
\[\widetilde{v}(g)x = \big(\rho(w(g))v(g)x\big)\cdot w(g)^*\]
for all $x \in X$. We claim that $(\widetilde\rho,\widetilde{v})$ is an equivariant representation of $\Sigma^w$ on $X$. 

Indeed, let $ g, h\in G,  a \in A$ and $x, y \in X$. Then, using the properties of $(\rho, v)$ repeatedly, we get:
\begin{itemize}
\item[(i)] 
\begin{align*} \widetilde\rho\big(\alpha^w_g(a)\big) \widetilde{v}(g) x 
&=\rho\big(w(g)\alpha_g(a)w(g)^*\big) \Big( \big(\rho(w(g)) v(g)x\big)\cdot w(g)^*\Big) \\
&=\big(\rho(w(g)) \rho(\alpha_g(a)) v(g) x \big)\cdot w(g)^*   \\
&=\big(\rho(w(g)) v(g) \rho(a)  x \big)\cdot w(g)^* \\
&=  \widetilde{v}(g)  \widetilde\rho(a) x,
\end{align*}
\item[(ii)] 
\begin{align*}
& \widetilde{v}(g) \widetilde{v}(h)x =  \big(\rho(w(g)) v(g) \widetilde{v}(h) x \big) \cdot w(g)^*  \\
& =  \Big(\rho(w(g)) v(g) \big( (\rho(w(h)) v(h) x) \cdot w(h)^*  \big) \Big) \cdot w(g)^* \\
& = \Big(\rho(w(g)) \Big( \big( (v(g) \rho(w(h)) v(h) x) \cdot \alpha_g(w(h))^* \big) \Big) \Big) \cdot w(g)^* \\
& = \Big(\rho(w(g)) \big( (\rho(\alpha_g((w(h))) v(g) v(h) x) \cdot \alpha_g(w(h))^* \big) \Big) \cdot w(g)^*  \\
& = \Big( \big( \rho(w(g)) \rho(\alpha_g(w(h))) v(g) v(h) x \big) \cdot \alpha_g(w(h))^* \Big) \cdot w(g)^*  \\
& = \Big( \rho(w(g)) \rho(\alpha_g(w(h))) \big( (\rho(\sigma(g,h)) v(gh) x) \cdot \sigma(g,h)^* ) \big) \Big) \cdot \alpha_g(w(h))^*  w(g)^*  \\
& = \Big( \rho(\sigma^w(g,h)) \rho(w(gh)) v(gh) x \Big) \cdot \sigma(g,h)^*  \alpha_g(w(h))^*  w(g)^*  \\
& = \Big( \rho(\sigma^w(g,h)) \rho(w(gh)) v(gh) x \Big) \cdot w(gh)^* w(gh) \sigma(g,h)^*  \alpha_g(w(h))^*  w(g)^*  \\
& = \Big( \rho(\sigma^w(g,h)) \big( ( \rho(w(gh)) v(gh) x ) \cdot w(gh)^* \big) \Big) \cdot \sigma^w(g,h)^*  \\
& = \Big( \rho(\sigma^w(g,h)) \big( \widetilde{v}(gh)x \big) \Big) \cdot \sigma^w(g,h)^*  \\
& = {\rm ad}_{\widetilde\rho}(\sigma^w(g,h)) \widetilde{v}(gh)x, \\
\end{align*}
\item[(iii)] \begin{align*} \alpha^w_g\big(\langle x  , y \rangle\big)   &=  w(g)  \alpha_g\big(\langle x  , y \rangle\big)  w(g)^*\\
&= w(g) \big\langle v(g) x , v(g) y \big\rangle w(g)^* \\
&= w(g) \big\langle \rho(w(g)) v(g) x, \rho(w(g)) v(g) y \big\rangle w(g)^* \\
&=  \big\langle \big(\rho(w(g)) v(g) x\big)\cdot w(g)^*, \big(\rho(w(g)) v(g) y\big) \cdot w(g)^* \big\rangle \\
&= \big\langle \widetilde{v}(g) x, \widetilde{v}(g) y \big\rangle,
\end{align*}
\item[(iv)]  
\begin{align*}
\widetilde{v}(g)(x \cdot a) & = \big( \rho(w(g)) v(g) (x \cdot a) \big) \cdot w(g)^* \\
& = \rho(w(g)) \big( (v(g) x) \cdot (\alpha_g(a) w(g)^*) \big) \\
& = \rho(w(g)) \big( (v(g)x) \cdot (w(g)^* \alpha_g^w(a)) \big) \\
& = \Big( \rho(w(g))\big((v(g)x) \cdot w(g)^*\big)\Big) \cdot \alpha_g^w(a) \\
& = (\widetilde{v}(g)x) \cdot \alpha_g^w(a),
\end{align*}
\end{itemize} 
as claimed. Now for all $g \in G$ and $a\in A$  we have
\begin{align*}
[\Pi(T)](g,a) &= T_{\rho, v, x, y}\big(g, aw(g)\big)w(g)^* = \big\langle x, \rho(aw(g)) v(g) y\big\rangle  w(g)^*\\
&=  \big\langle x, \big(\rho(a)\rho(w(g)) v(g) y\big)\cdot w(g)^*\big\rangle = \big\langle x, \widetilde\rho(a)\widetilde{v}(g) y\big\rangle\\
&=  T_{\widetilde\rho, \widetilde{v}, x, y} (g,a),
\end{align*}
so we get that $\Pi$ maps $B(\Sigma)$ into $B(\Sigma^w)$ and that $\|\Pi(T)\| \leq \|x\|  \|y\|$. Since this inequality holds for any $\rho,v, x,y$ such that $T= T_{\rho, v, x, y}$, it follows that  $\|\Pi(T)\| \leq \|T\|$. By symmetry, we then see that $\Pi$ restricts to an isometric algebra-isomorphism between $B(\Sigma)$ and $B(\Sigma^w)$. 
It follows that $\Psi := \Pi^{-1}$ is an algebra-isomorphism from $B(\Theta) = B(\Sigma^w)$ onto $B(\Sigma)$ such that 1) holds.  
In passing, we note that one can also easily deduce that $\Pi(T)$ is $\Sigma^w$-positive definite whenever  $T$ is $\Sigma$-positive definite, either by a direct computation, or using what we just have done in combination with the Gelfand-Raikov characterization of positive definiteness (cf.~\cite[Corollary 4.4]{BeCo6}). 

Let now $f\in B(G)$ and consider $T^f \in B(\Sigma)$. Then  we have that 
\[\Pi(T^f)(g, a) = T^f\!\big(g, aw(g)\big)w(g)^* = \big(f(g) aw(g)\big)w(g)^* = f(g)  a = T^f\!(g,a) \]
for all $g\in G$ and $a\in A$, which shows that $\Pi(T^f) = T^f \in B(\Sigma^w)$. Thus it is clear that $\Pi$ restricts to the identity map from $B(G)$ (inside $B(\Sigma)$) into $B(G)$  (inside $B(\Sigma^w)$), hence that $\Psi = \Pi^{-1}$ satisfies 2).

Finally, 
 let $b\in A$ and consider $T^b \in B(\Sigma)$. Then  we have that 
\[\Pi(T^b)(g, a) = T^b\big(g, aw(g)\big)w(g)^* = baw(g)w(g)^* = b a = T^b(g,a) \]
for all $g\in G$ and $a\in A$. Thus it is clear that $\Pi$ restricts to the identity map from $A$ (inside $B(\Sigma)$) into $A$ (inside $B(\Sigma^w)$), hence that $\Psi = \Pi^{-1}$ satisfies 3).
\end{proof}

\begin{remark}
The converse of Theorem \ref{coc-conj-isom} is not  true in general. Indeed, set
 \[Z^2(G, \Toro) = \{ \omega:G\times G\to \Toro \mid  \omega \text{ is a normalized $2$-cocycle on $G$}\}.\] 
Then let $\omega \in Z^2(G, \Toro)$ and consider the systems $\Sigma =(\Complessi, G, {\rm triv}, 1)$ and $\Theta=(\Complessi, G, {\rm triv}, \omega)$, where ${\rm triv}$ denotes the obvious action of $G$ on $\Complessi$. Then we have that $B(\Sigma) = B(G) = B(\Theta)$, but $\Sigma$ is not cocycle  group conjugate to $\Theta$ if $\omega$ is not a coboundary. 
\end{remark}

\begin{remark} \label{coc-pert2}
In order to look for a converse of Theorem \ref{coc-conj-isom} one option is to weaken cocycle group conjugacy as follows. If $\Sigma = (A, G, \alpha, \sigma)$ is a system and  $\omega \in Z^2(G,\Toro)$, then we may regard $\omega$ as a normalized $2$-cocycle for the restriction of $\alpha$ to $Z(A)$ and perturb $\Sigma$ by $\omega$ (cf.~Remark \ref{coc-pert}). Obviously, $\Sigma$ and $\Sigma(\omega)= (A, G, \alpha, \sigma_\omega)$ have then the same equivariant representations, so we have that $B(\Sigma)=B(\Sigma(\omega))$.

If  $\Theta =(B, H, \beta, \theta )$ is another system, let us  say that $\Sigma$ and $\Theta$ are \emph{weakly cocycle group conjugate} if  $\Sigma(\omega)$ is cocycle group conjugate to $\Theta$ for some $\omega \in Z^2(G, \Toro)$. Using Theorem \ref{coc-conj-isom} we get that $B(\Theta)$ is then isomorphic to $B(\Sigma(\omega))= B(\Sigma)$ via an algebra-isomorphism satisfying 1), 2) and 3).
 \end{remark}
 
Let us now assume that  the conclusion of Theorem \ref{coc-conj-isom}  holds.
One may then wonder under which additional requirements it would be possible to conclude that  $\Sigma$ and $\Theta$ are weakly cocycle group conjugate. A result in this direction goes as follows.

By  invoking Walter's theorem recalled in the introduction we get from 2) that $\Psi$ determines an isomorphism $\varphi:G\to H$, while 3) gives that there is a $*$-isomorphism $\phi:A \to B$.
 For each $g \in G$, set $\gamma_g := \phi^{-1}\beta_{\varphi(g)} \phi \in {\rm Aut}(A)$. Then one may check whether  $\gamma_g$ and $\alpha_g$ agree up to inner automorphisms for every $g \in G$. Assume that this happens to be the case, i.e., there exists some normalized map $w:G\to\U(A)$ such that 
 $\gamma_g = {\rm Ad}(w(g)) \alpha_g$ for all $g\in G$.   
 Then, letting $u:G\times G \to \U(A)$ be defined by 
 \[u(g, g')= \phi^{-1}\big(\theta(\varphi(g), \varphi(g'))\big) \quad \text{ for all }  g, g' \in G,\]
 we get a twisted action $(\gamma, u)$ of $G$ on $A$. Define then a map $\omega:G\times G \to \U(A)$ by 
  \[\omega(g, g') := u(g, g')  \sigma^w(g, g')^* \]
  for all $g, g' \in G$. Then, using the two expressions for $\gamma$ and making use of some cocycle identities, one verifies that $\omega$ takes its values in $Z(A)$, and that it is a $2$-cocycle for $\alpha'$ (the restriction of $\alpha$ to $Z(A)$). Since $u = (\sigma^w)_\omega$, it follows that
 \[\Theta = (B, H, \beta, \theta) \sim_{gc} (A, G, \gamma, u) = (A, G, \alpha^w, 
(\sigma^w)_\omega) = \Sigma^w(\omega)\]
 (using notation as in Remark \ref{coc-pert}). 
 Hence, if $A$ (and therefore $B$) has trivial center, we get that $\omega \in Z^2(G, \Toro)$ and
  $\Theta$ is group conjugate to $\Sigma^w(\omega)$, which is exterior equivalent to $\Sigma(\omega)$. Thus, $\Sigma$ and $\Theta$ are weakly group cocycle conjugate in this case. 
  
As a consequence, we obtain the following.
\begin{theorem}
Consider two systems $\Sigma = (A, G, \alpha, \sigma)$ 
and  $\Theta = (B, H, \beta, \theta )$. Assume that there exists an algebraic isomorphism $\Pi: B(\Sigma) \to B(\Theta)$ satisfying that
\begin{equation}\label{eqPi}
\Pi(T)\big(\varphi(g),\phi(a)\big) = \phi\big(T(g,aw(g))w(g)^*\big) \quad \text{ for all} \ g \in G, a\in A,
\end{equation}
for some isomorphism $\varphi: G \to H$, some $*$-isomorphism $\phi: A \to B$
and some map $w: G \to \U(A)$, which also satisfies
\begin{equation}\label{eqPi2}
\Pi(T_{\ell_A,\alpha,x,y}) = T_{\ell_B,\beta,\phi(x),\phi(y)}
\end{equation}
for all $x,y \in A$.
If the center of $A$ is trivial, then $\Sigma$ and $\Theta$ are weakly cocycle group conjugate.
\end{theorem}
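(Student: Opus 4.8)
The plan is to unwind the hypotheses until we stand exactly in the situation analysed in the paragraph preceding the theorem, with the given $w$ playing the role of the perturbing map there.

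For each $g\in G$ put $\gamma_g := \phi^{-1}\beta_{\varphi(g)}\phi\in{\rm Aut}(A)$ and $u(g,g'):=\phi^{-1}\big(\theta(\varphi(g),\varphi(g'))\big)\in\U(A)$; since $\varphi$ is a group isomorphism (sending $e$ to $e$) and $\phi$ a $*$-isomorphism, $(\gamma,u)$ is a normalized twisted action of $G$ on $A$, obtained by transporting $(\beta,\theta)$ along $\phi$ and $\varphi$. The crucial first step is to convert \eqref{eqPi2} into the statement that $\gamma=\alpha^{w}$. Recall that, for the trivial equivariant representation on $A$ viewed as a module over itself, $T_{\ell_A,\alpha,x,y}(g,a)=x^{*}a\,\alpha_g(y)$. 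Applying \eqref{eqPi} to $T=T_{\ell_A,\alpha,x,y}$ yields
\[
\Pi\big(T_{\ell_A,\alpha,x,y}\big)\big(\varphi(g),\phi(a)\big)=\phi\big(x^{*}a\,w(g)\alpha_g(y)w(g)^{*}\big)=\phi(x)^{*}\phi(a)\,\phi\big({\rm Ad}(w(g))(\alpha_g(y))\big),
\]
while \eqref{eqPi2} says the left-hand side equals $T_{\ell_B,\beta,\phi(x),\phi(y)}\big(\varphi(g),\phi(a)\big)=\phi(x)^{*}\phi(a)\,\beta_{\varphi(g)}(\phi(y))$. Taking $x=a=1_A$ and using that $\phi$ is surjective forces $\phi\circ{\rm Ad}(w(g))\circ\alpha_g=\beta_{\varphi(g)}\circ\phi$, i.e.\ $\gamma_g={\rm Ad}(w(g))\circ\alpha_g=\alpha^{w}_g$ for all $g$. (Taking $g=e$ shows $w(e)\in\U(Z(A))$; since $Z(A)=\Complessi 1_A$ we have $w(e)=\lambda 1_A$ for some $\lambda\in\Toro$, and replacing $w$ by $\overline\lambda\,w$ — which alters neither \eqref{eqPi} nor ${\rm Ad}(w(g))$ — we may assume $w$ normalized, so that $\Sigma^{w}$ is defined.)

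We are now in the setting of the discussion preceding the statement. Since $\gamma=\alpha^{w}$, the twisted actions $(\alpha^{w},u)$ and $(\alpha^{w},\sigma^{w})$ share the same automorphism part, so ${\rm Ad}(u(g,h))=\alpha^{w}_g\alpha^{w}_h(\alpha^{w}_{gh})^{-1}={\rm Ad}(\sigma^{w}(g,h))$ for all $g,h\in G$; hence $\omega(g,g'):=u(g,g')\,\sigma^{w}(g,g')^{*}$ lies in $\U(Z(A))$. Comparing the $2$-cocycle identities for $u$ (inherited from $\theta$) and for $\sigma^{w}$ and cancelling the common factor then shows that $\omega$ is a normalized $2$-cocycle for $\alpha'$, the restriction of $\alpha$ — equivalently of $\alpha^{w}$, as ${\rm Ad}(w(g))$ is trivial on $Z(A)$ — to $Z(A)$; this is exactly the computation already indicated in that discussion. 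Thus $u=(\sigma^{w})_\omega$, and $\phi,\varphi$ witness $\Theta=(B,H,\beta,\theta)\sim_{gc}(A,G,\alpha^{w},(\sigma^{w})_\omega)=\Sigma^{w}(\omega)$. As $\omega$ is central, $(\sigma^{w})_\omega=(\sigma_\omega)^{w}$, so $\Sigma^{w}(\omega)=\Sigma(\omega)^{w}$ is exterior equivalent to $\Sigma(\omega)$; therefore $\Sigma(\omega)\sim_{cgc}\Theta$. Finally, $Z(A)=\Complessi 1_A$ gives $\U(Z(A))=\Toro 1_A$, so $\omega\in Z^{2}(G,\Toro)$, and since $\Sigma(\omega)$ is then a perturbation of $\Sigma$ by a $\Toro$-valued group $2$-cocycle that is cocycle group conjugate to $\Theta$, this is precisely the assertion that $\Sigma$ and $\Theta$ are weakly cocycle group conjugate.

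The only genuinely technical point is the verification that $\omega$ is a $2$-cocycle for $\alpha'$, but this has already been carried out in the preceding discussion; the new content is thus concentrated in the first paragraph, where \eqref{eqPi} and \eqref{eqPi2} are combined, via the explicit form of the coefficient functions of the trivial equivariant representation, to extract the identity $\gamma_g=\alpha^{w}_g$. In other words, the theorem is essentially a clean repackaging of the hypotheses needed to make the earlier discussion go through, and I expect the proof to be short once that identity is in hand.
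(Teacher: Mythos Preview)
Your proposal is correct and follows exactly the approach of the paper: combine \eqref{eqPi} and \eqref{eqPi2}, using the explicit form $T_{\ell_A,\alpha,x,y}(g,a)=x^{*}a\,\alpha_g(y)$ of the trivial coefficient functions, to extract $\phi^{-1}\beta_{\varphi(g)}\phi={\rm Ad}(w(g))\alpha_g$, and then invoke the discussion preceding the theorem verbatim. You in fact give more detail than the paper (which simply asserts the key identity and refers back), including the minor but necessary observation that $w$ may be normalized without loss.
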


\begin{proof}
Using 
(\ref{eqPi}) and (\ref{eqPi2}), one deduces that  $ \phi^{-1}\beta_{\varphi(g)}\phi={\rm Ad}(w(g)) \alpha_g$ for all $g \in G$. We are then in the position to proceed as we did above, and the desired assertion follows at once.
\end{proof}

\section{On Morita equivalent systems}

Let us consider two twisted unital discrete $C^*$-dynamical systems $\Sigma=(A,G,\alpha,\sigma)$ and $\Theta=(B,G,\beta,\theta)$ over the same group $G$. (We will briefly discuss the more general situation in Remark \ref{weak ME}.)   Our main aim in this section is to show that  if $\Sigma$ and $\Theta$ are \emph{Morita equivalent} in the sense of \cite{Bui, Kal}, then the Fourier-Stieltjes algebras $B(\Sigma)$ and $B(\Theta)$ can be determined from each other.  Morita equivalence for (untwisted) $C^*$-dynamical systems goes at least back to \cite{Com}.
For the ease of the reader, we review the definitions of the concepts that we will use.

Following \cite{EKQR}, we say that a right Hilbert $B$-module $Z$ is  a \emph{right Hilbert $A$-$B$ bimodule} if there is a 
homomorphism $\kappa: A \to \Lc_B(Z)$.\footnote{We recall that by our standing assumptions, $\kappa$ is then unit preserving, hence nondegenerate, as required in \cite{EKQR}.}  We set $a\cdot z = \kappa(a)z$ for $a\in A$ and $z \in Z$, and frequently write $_AZ_B$ for $Z$. A \emph{right Hilbert  $A$-$B$ bimodule isomorphism}  $\Phi : _A\!\!Z_B \to _A\!\!W_B$ between two right $A$-$B$ Hilbert bimodules $Z$ and $W$ (or simply an isomorphism, for short)  is a bimodule isomorphism such that $\langle\Phi(z), \Phi(z')\rangle_B = \langle z, z'\rangle_B$ for $z, z' \in _A\!\!Z_B$. Left  Hilbert $A$-$B$ bimodules and their isomorphisms are defined in a similar way.

Let  $_AZ_B$ be a right Hilbert $A$-$B$-bimodule.  
A map $\delta$ from $G$ into $\I(Z)$
(the group of invertible $\Complessi$-linear bounded maps from $Z$ into itself) is called a $(\Sigma,\Theta)$-\emph{compatible action  of $G$ on $_AZ_B$} when the following conditions are satisfied for $g \in G, a \in A$, $z,\zeta \in Z$ and $b \in B$:
\begin{itemize}
\item $\delta(g)(a\cdot z) = \alpha_g(a) \cdot (\delta(g)z)$,
\item $\delta(g)(z\cdot b) = (\delta(g)z)\cdot  \beta_g(b)$,
\item $\delta(g)  \delta(h) z = \sigma(g,h) \cdot (\delta(gh)z) \cdot \theta(g,h)^*$,
\item $\big\langle \delta(g)z,\delta(g)\zeta \big\rangle_B = \beta_g(\langle z,\zeta\rangle_B)$.
\end{itemize}
We will let $S_{\delta, z, \zeta}: G\times A\to B$ be the map defined by 
\[ S_{\delta, z, \zeta}(g,a) = \big\langle z, a\cdot (\delta(g)\zeta)\big\rangle_B\]
for all $g\in G$ and $a\in A$. Clearly, if $g\in G$ is fixed,  the map $a\to S_{\delta, z, \zeta}(g,a)$ from $A$ into $B$ is linear; moreover, it is bounded, since one easily shows that 
\[ \|S_{\delta, z, \zeta}(g,a)\|  \leq \|z\| \|\zeta\| \|a\| \]
for all $a\in A$.

Two $(\Sigma,\Theta)$-compatible actions $\delta$ and $\delta'$ of $G$, acting respectively on  $_AZ_B$ and $_AZ'_B$, are called \emph{equivariantly isomorphic} if there exists an isomorphism of right Hilbert $A$-$B$-bimodules between $_AZ_B$ and $_AZ'_B$ which intertwines $\delta$ and $\delta'$. 

\begin{remark} \label{equirep-action}
If $(\rho,v)$ is an equivariant representation  of $\Sigma$ on a right Hilbert $A$-module $X$, then $X$ is a right Hilbert  $A$-$A$-bimodule (using $\rho$ as the left action of $A$ on $X$)
and $v$ is a $(\Sigma,\Sigma)$-compatible action of $G$ on $_A X_A$. Conversely, if $v$ is a $(\Sigma,\Sigma)$-compatible action of $G$ on a right Hilbert $A$-$A$-bimodule $X$, where the left action of $A$ on $X$ is given by some  homomorphism $\rho:A\to \Lc_A(X)$, then 
 $(\rho, v)$ is an equivariant representation of $\Sigma$ on $X$.
For example, if we consider $A$ as a right Hilbert $A$-$A$-bimodule in the obvious way, then the map $\alpha: G \to \I(A)$ is a $(\Sigma,\Sigma)$-compatible action of $G$ on $_AA_A$, corresponding to the trivial equivariant representation $(\alpha, \ell)$ of $\Sigma$ on $A$. 
\end{remark}

 We recall that a right Hilbert $B$-module $X$ is called \emph{full} when $\overline{\langle X, X\rangle} = B$. Fullness of a left Hilbert $C^*$-module is defined in a similar way. An \emph{$A$-$B$ imprimitivity bimodule} $Z={} _AZ_B$ (sometimes called an equivalence $A$-$B$-bimodule)  is a full right Hilbert $A$-$B$-bimodule w.r.t.~a $B$-valued inner product $\langle \cdot, \cdot \rangle_B$, which is also a full left Hilbert $A$-$B$-bimodule w.r.t.~to an $A$-valued inner product $_A\langle \cdot, \cdot \rangle$, in such a way that 
\[_A\langle z, z'\rangle\cdot z'' = z\cdot \langle z', z''\rangle_B \]
for  all $z, z', z'' \in Z$. It then follows that $\|_A\langle z, z\rangle\| =\|\langle z, z\rangle_B\|$ for all $z \in Z$, hence that the two norms on $Z$ associated to the left and the right inner products coincide.

Following \cite{Bui, Kal}, we say that the two systems $\Sigma$ and $\Theta$ are  \emph{Morita equivalent} when there exist an $A$-$B$ imprimitivity bimodule $Z$ together with a $(\Sigma,\Theta)$-compatible action $\delta$ of $G$ on $Z$; we then write $\Sigma \sim _{(Z,\delta)} \Theta$.
We note that  $\delta$ automatically satisfies
\begin{itemize}
\item $_A\big\langle \delta(g)z,\delta(g)\zeta \big\rangle = \alpha_g(_A\langle z,\zeta\rangle),$
\end{itemize}
 see e.g.~the argument given in  \cite[Remark 2.6 (2)]{EKQR}.

It is easy to check that $\Sigma$ and $\Theta$ are  Morita equivalent whenever they are  cocycle conjugate (see e.g.~\cite[Section 9]{Com} for the untwisted case). 
Moreover, Morita equivalent twisted $C^*$-dynamical systems have Morita equivalent
$C^*$-crossed products (see \cite[Theorem 2.3]{Bui} for the full case, and \cite[Sections 2.5.4 and 2.8.6]{CELY} for the reduced case).
We also mention the  following result, which is probably a part of the folklore on this topic.
\begin{proposition}\label{MR-comm}
Assume that $\Sigma=(A,G,\alpha,\sigma)$ and $\Theta=(B,G,\beta,\theta)$ are Morita equivalent, and that $A$ and $B$ are commutative. Then the action $\alpha$ of {}$G$ on $A$ is  conjugate to the action $\beta$ of $G$ on $B$, i.e., there exists an isomorphism $\phi$ from $A$ onto $B$ which intertwines these actions. 
Moreover, $\Sigma$ is conjugate to the system $(B, G, \beta, \sigma_\phi)$, while $\Theta $ is conjugate to the system $(A, G, \alpha, \theta_{\phi^{-1}})$, where $\sigma_\phi(g, h):= \phi(\sigma(g, h))$ and $\theta_{\phi^{-1}}(g,h):= \phi^{-1}(\theta(g,h))$ for all $g, h \in G$.
\end{proposition}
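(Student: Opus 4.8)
The plan is to extract a $*$-isomorphism $\phi\colon A\to B$ from the imprimitivity bimodule alone, and then use the compatible action $\delta$ to see that $\phi$ intertwines $\alpha$ and $\beta$; the two conjugacy assertions will follow at once. Observe first that, since $A$ and $B$ are commutative, the twisted-action relations collapse to $\alpha_g\alpha_h=\alpha_{gh}$ and $\beta_g\beta_h=\beta_{gh}$, so $\alpha$ and $\beta$ are genuine actions and $\sigma,\theta$ are automatically central; thus the statement makes sense as phrased.

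Assume $\Sigma\sim_{(Z,\delta)}\Theta$, where $Z={}_AZ_B$ is an $A$-$B$ imprimitivity bimodule. It is a standard fact of Morita theory that, because the two inner products on $Z$ are linked by ${}_A\langle z,z'\rangle\cdot z''=z\cdot\langle z',z''\rangle_B$, for every $a\in Z(A)$ there is a unique $\phi(a)\in Z(B)$ with
\[a\cdot z=z\cdot\phi(a)\qquad(z\in Z),\]
and $\phi\colon Z(A)\to Z(B)$ is a $*$-isomorphism: multiplicativity is immediate from $(ab)\cdot z=a\cdot(b\cdot z)$, the $*$-preservation uses commutativity of $B$, and bijectivity follows from the faithfulness of the left and right actions on the full bimodule $Z$ (for instance via the symmetric construction of an inverse). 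Since $A$ and $B$ are unital and commutative, $Z(A)=A$ and $Z(B)=B$, so $\phi$ is a $*$-isomorphism of $A$ onto $B$ characterised by $a\cdot z=z\cdot\phi(a)$. Alternatively, and self-containedly in the commutative case: realising $Z$ over the spectrum $Y$ of $B$, fullness of $Z$ makes all its fibres nonzero while commutativity of $A\cong\mathcal{K}_B(Z)$ makes them all one-dimensional, so $Z$ is the section module of a Hermitian line bundle over $Y$ and $A\cong C(Y)\cong B$.

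Next I would verify that $\phi$ is equivariant, which is the only place the action $\delta$ enters (and, in fact, only through its first defining property). Fix $g\in G$ and $a\in A$, and let $z\in Z$ be arbitrary; writing $z=\delta(g)z'$ (possible since $\delta(g)$ is invertible) and using $\delta(g)(c\cdot\zeta)=\alpha_g(c)\cdot\delta(g)\zeta$ together with the defining property of $\phi$, one computes
\[z\cdot\beta_g(\phi(a))=\delta(g)\big(z'\cdot\phi(a)\big)=\delta(g)\big(a\cdot z'\big)=\alpha_g(a)\cdot\delta(g)z'=\alpha_g(a)\cdot z=z\cdot\phi(\alpha_g(a)).\]
Since $z$ was arbitrary and the right action of $B$ on $Z$ is faithful (again by fullness), this gives $\beta_g\circ\phi=\phi\circ\alpha_g$, i.e.\ $\beta_g=\phi\circ\alpha_g\circ\phi^{-1}$, for all $g\in G$; this is the first assertion. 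Consequently $\sigma_\phi$, i.e.\ $(g,h)\mapsto\phi(\sigma(g,h))$, is a normalised $2$-cocycle for $\beta$, so $(B,G,\beta,\sigma_\phi)$ is a system and $(\phi,\mathrm{id}_G)$ witnesses $\Sigma\sim_c(B,G,\beta,\sigma_\phi)$; symmetrically, $\phi^{-1}$ intertwines $\beta$ and $\alpha$ and carries $\theta$ to the cocycle $\theta_{\phi^{-1}}=(g,h)\mapsto\phi^{-1}(\theta(g,h))$ for $\alpha$, so $(\phi^{-1},\mathrm{id}_G)$ witnesses $\Theta\sim_c(A,G,\alpha,\theta_{\phi^{-1}})$.

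The only step with genuine content is the construction of $\phi$ in the second paragraph, namely the identification of the centers of two Morita equivalent $C^*$-algebras; once $\phi$ is available, everything else is short and formal, and the line-bundle picture offers a self-contained route to $\phi$ in the commutative setting should one prefer not to quote the general fact.
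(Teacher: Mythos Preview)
Your argument is correct and follows essentially the same route as the paper: construct the canonical isomorphism $\phi$ between the commutative algebras from the imprimitivity bimodule (the paper cites \cite{BCL} for this, characterising $\phi$ via $\phi({}_A\langle z,z'\rangle)=\langle z',z\rangle_B$, which is equivalent to your $a\cdot z=z\cdot\phi(a)$), then verify equivariance using the compatibility of $\delta$, and conclude. One small correction: your parenthetical claim that equivariance uses ``only the first defining property'' of $\delta$ is not quite right---your displayed computation also uses the second property $\delta(g)(z'\cdot b)=(\delta(g)z')\cdot\beta_g(b)$ in the very first equality; the paper instead uses the two inner-product compatibilities of $\delta$, which amounts to the same thing.
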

\begin{proof}
The assumption says that  $\Sigma \sim _{(X,\delta)} \Theta$ for some $A$-$B$ imprimitivity bimodule $Z$ and some $(\Sigma,\Theta)$-compatible action $\delta$ of $G$ on $Z$. In particular,  $A$ and $B$ are  Morita equivalent. As $A, B$ are both commutative, we can then apply \cite[Theorem 2.24]{BCL} to conclude that there is a unique isomorphism $\phi:A \to B$ satisfying that
\begin{equation} \label{phi-prop}
\phi(_A\langle z, z'\rangle ) = \langle z', z\rangle_B \quad \text{for all }  z, z'\in Z,
\end{equation}
and we also have that $a\cdot z = z\cdot \phi(a) $ for all $a\in A$ and $z\in Z$.
Using properties of $\delta$ in combination with  (\ref{phi-prop}) we get
\begin{align*} 
\phi\big(\alpha_g(_A\langle z, z'\rangle) \big)&= \phi\big(_A\langle\delta(g) z, \delta(g)z'\rangle \big)= \langle\delta(g) z', \delta(g)z\rangle_B \\
& = \beta_g( \langle z', z\rangle_B) = \beta_g\big( \phi(_A\langle z, z'\rangle)\big)
\end{align*}
for all $g \in G$ and $z, z'\in Z$. Since $Z$ is full as a left Hilbert $A$-module, it follows that $\phi \alpha_g = \beta_g \phi$ for every $g\in G$, hence that $\alpha$ and $\beta$ are conjugate. This shows the first part of the proposition. The second part follows immediately. 
\end{proof}
In the setting  of Proposition \ref{MR-comm}, 
it is not clear that 
$\Sigma$ and $\Theta$ are conjugate. 
However, this is certainly the case when $\sigma$ and $\theta$ are both trivial:

\begin{corollary}\label{com-M-eq}
Suppose that $(A, G, \alpha)$ and $(B, G, \beta)$ are $($untwisted discrete unital$)$ $C^*$-dynamical systems with both $A$ and $B$ commutative. Then these systems are Morita equivalent if and only if they are conjugate,
in which case the associated Fourier-Stieltjes algebras are isometrically isomorphic.
\end{corollary}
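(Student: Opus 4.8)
The plan is to prove Corollary \ref{com-M-eq} by combining Proposition \ref{MR-comm} (in the untwisted case, where it simply says $\alpha$ and $\beta$ are conjugate) with Theorem \ref{coc-conj-isom} (conjugate systems are cocycle group conjugate, hence have isometrically isomorphic Fourier-Stieltjes algebras). The two implications go as follows.

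First I would address the direction ``conjugate $\Rightarrow$ Morita equivalent''. This is the easy half: it was already noted in the paragraph preceding Proposition \ref{MR-comm} that any two cocycle conjugate systems are Morita equivalent, and conjugacy is a fortiori cocycle conjugacy (taking the perturbing map $w\equiv 1_A$). So if $(A,G,\alpha)$ and $(B,G,\beta)$ are conjugate they are certainly Morita equivalent, and no commutativity is needed here.

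For the converse, ``Morita equivalent $\Rightarrow$ conjugate'', I would invoke Proposition \ref{MR-comm} with $\sigma=\theta=1$. It produces an isomorphism $\phi:A\to B$ with $\phi\alpha_g=\beta_g\phi$ for all $g\in G$; since $\sigma$ and $\theta$ are trivial we have $\sigma_\phi=1=\theta_{\phi^{-1}}$, so the statement ``$\Sigma$ is conjugate to $(B,G,\beta,\sigma_\phi)$'' becomes exactly ``$(A,G,\alpha)$ is conjugate to $(B,G,\beta)$'' (via $\phi$ and $\varphi=\mathrm{id}_G$). That settles the equivalence of the two properties.

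Finally, for the last clause about the Fourier-Stieltjes algebras: once we know the two systems are conjugate, they are in particular cocycle group conjugate, so Theorem \ref{coc-conj-isom} applies directly and yields an isometric algebra-isomorphism between $B(A,G,\alpha)$ and $B(B,G,\beta)$ (with the additional features 1)--3), though only isometry is asserted in the corollary). I do not expect any genuine obstacle here, since both halves are immediate consequences of results already established in the excerpt; the only point requiring a moment's care is checking that the triviality of the $2$-cocycles makes the ``perturbed'' systems in Proposition \ref{MR-comm} coincide with the original untwisted ones, so that Proposition \ref{MR-comm} really delivers conjugacy of the \emph{systems} and not merely of the underlying group actions.
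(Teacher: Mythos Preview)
Your proposal is correct and follows essentially the same approach as the paper: the paper's proof is simply the one-line ``This follows from Proposition \ref{MR-comm} and Theorem \ref{coc-conj-isom},'' and you have unpacked precisely how those two results combine (using the observation just before Proposition \ref{MR-comm} for the easy direction, and noting that $\sigma_\phi=1$ when $\sigma=1$ so that Proposition \ref{MR-comm} yields conjugacy of the systems, not just of the actions).
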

\begin{proof} This follows from Proposition \ref{MR-comm} and Theorem \ref{coc-conj-isom}. \end{proof}

Assume now that $\Omega=(C, G, \gamma, \omega)$ is another twisted discrete unital $C^*$-dynamical system, $\delta$ is a $(\Sigma, \Theta)$-compatible action of $G$ on $_AX_B$ and $\eta$ is a $(\Theta, \Omega)$-compatible action of $G$ on $_BY_C$. 
If $\pi:B\to \Lc_C(Y)$ denotes the left action of $B$ on $Y$, we can form the internal tensor product  $X\otimes_\pi Y$, which is a right Hilbert $C$-module (cf.~\cite{La1}); we will suppress $\pi$ in our notation and denote $X\otimes_\pi Y$ by $X\otimes_B Y$ in the sequel, as is common in the literature. 
Moreover,  $X\otimes_B Y$ can be turned into a right Hilbert $A$-$C$ bimodule, the left action of $A$ on $X\otimes_B Y$ being given on simple tensors by $a\cdot(x\dot\otimes y) = (a\cdot x) \dot\otimes y$, and we can define a 
$(\Sigma, \Omega)$-compatible \emph{product action} $\delta\otimes_B\eta$ of $G$ on $_A(X\otimes_B Y)_C$, which is given on simple tensors by
$(\delta\otimes_B\eta)(g)(x\dot\otimes y) = \delta(g)x  \dot\otimes  \eta(g)y$.  
Indeed, as a sample, consider $g, h \in G$, $x \in X$ and $y \in Y$. Then we have
\begin{align*}
\big((\delta\otimes_B\eta)&(g) (\delta\otimes_B\eta)(h)\big) (x\dot\otimes y) 
= (\delta\otimes_B\eta)(g)\big( \delta(h)x  \dot\otimes  \eta(h)y\big) \\ 
& =\delta(g)\delta(h)x  \dot\otimes  \eta(g)\eta(h)y\\ 
&=\big(\sigma(g,h) \cdot (\delta(gh)x) \cdot \theta(g,h)^*\big) \dot\otimes  \big(\theta(g,h) \cdot (\eta(gh)x) \cdot \omega(g,h)^*\big)\\
 &= \big(\big(\sigma(g,h) \cdot (\delta(gh)x) \cdot \theta(g,h)^*\big)\cdot \theta(g,h)\big)\dot\otimes \big(\big(\eta(gh)y\big) \cdot \omega(g,h)^*\big) \\
 &= \sigma(g,h) \cdot \big(\delta(gh)x \dot\otimes \eta(gh)y\big) \cdot \omega(g,h)^* \\
 &= \sigma(g,h) \cdot \big((\delta\otimes_B\eta)(gh)(x\dot\otimes y)\big) \cdot \omega(g,h)^* 
\end{align*}
Thus, by continuity, it follows that $\delta\otimes_B\eta$ satisfies the third property required for being a $(\Sigma, \Omega)$-compatible action.
The reader will find more details about  this 
construction and its  properties in \cite{EKQR-0, EKQR}. These articles deal with the untwisted case, but it is easy to adapt the proofs to our setting. 
In particular, arguing as in the proof of \cite[Theorem 2.8 and Remark 2.9]{EKQR}, we obtain that  the  following facts hold:
\begin{itemize}
\item Up to equivariant isomorphism, the product of compatible actions is associative. 
\item Recalling that $\alpha$ is  a  $(\Sigma, \Sigma)$-compatible action of $G$ on $_AA_A$, the $(\Sigma, \Theta)$-compatible product action $\alpha\otimes_A\delta$ of $G$ on $_A(A\otimes_AX)_B$ is equivariantly isomorphic to $\delta$.  In a similar way,  the product action $\delta\otimes_B\beta$ of $G$ on $_A(X\otimes_B B)_B$ is  equivariantly isomorphic to $\delta$.  
\item Assume that  $\Sigma$ and $\Theta$ are  Morita equivalent with $\Sigma \sim _{(Z,\delta)} \Theta$. Then we have: 
 \begin{itemize}
 \item 
 $\Theta \sim _{(\widetilde{Z},\widetilde\delta)} \Sigma$, where $\widetilde{Z}$ is the  right Hilbert $B$-$A$ bimodule conjugate (or reverse) to $Z$ and $\widetilde{\delta}$ is the $(\Theta, \Sigma)$-compatible action of $G$ on $\widetilde{Z}$ given by $\widetilde\delta(g) \widetilde{z} = \widetilde{\delta(g) z}$. 
 
 \item The product action $\delta\otimes_B \widetilde{\delta}$ of $G$ on $_A(Z\otimes_B\widetilde{Z})_A$ is  equivariantly isomorphic, as a  $(\Sigma, \Sigma)$-compatible action, to  $\alpha$.
 \item The product action $\widetilde\delta\otimes_A \delta$ of $G$ on $_B(\widetilde{Z}\otimes_A Z)_B$ is equivariantly isomorphic, as a $(\Theta, \Theta)$-compatible action, to $\beta$.
 \end{itemize}
 \end{itemize}
Next, consider a  $(\Sigma, \Sigma)$-compatible action $v$  of $G$ on a right Hilbert  $A$-$A$ bimodule $X$. We 
 will use the same notation as in \cite{EKQR} and  
 let $[X,v]$ denote the class of all pairs 
  $(X', v')$ where  $v'$ is a $(\Sigma, \Sigma)$-compatible action of $G$ on a right Hilbert  $A$-$A$-module $X'$ such that $v'$ is equivariantly isomorphic to $v$. 
 Further, we will let $\mathcal{A}(\Sigma)$ denote the collection of these equivalence classes. 
 Using the above properties, one sees that $\mathcal{A}(\Sigma)$ can be equipped with an associative product given by 
 \[ [X_1, v_1]  [X_2,v_2] := [ X_1\otimes_A X_2, v_1\otimes_A v_2],\]
and that $[A, \alpha]$ acts as a unit in  $\mathcal{A}(\Sigma)$.
Moreover, one readily gets the following result. 
 
 \begin{proposition} \label{Mor-eq-action} Assume that the systems $\Sigma$ and $\Theta$ are  Morita equivalent with $\Sigma \sim _{(Z,\delta)} \Theta$,
 and let $v$ be a $(\Sigma, \Sigma)$-compatible action  on a right Hilbert $A$-$A$ bimodule $X$. 
 
   Then $w:=(\widetilde{\delta}\otimes_A v)\otimes_A \delta$ is a $(\Theta, \Theta)$-compatible action  on the right Hilbert $B$-$B$-bimodule $Y:= (\widetilde{Z}\otimes_AX)\otimes_A Z$. 
  
      Moreover, the action $\delta\otimes_B (w\otimes_B \widetilde{\delta})$ on  the right Hilbert $A$-$A$-bimodule
$ \widetilde{Z} \otimes_B  (Y\otimes_B Z)$ is equivariantly isomorphic to $v$. 
 
   Hence, the map $[X,v] \mapsto [Y,w]$ gives a one-to-one correspondence between $\mathcal{A}(\Sigma)$ and $ \mathcal{A}(\Theta) $ which preserves products. 
  \end{proposition}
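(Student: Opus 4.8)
The plan is to derive all three assertions formally from the properties of the product of compatible actions recorded just above the proposition — associativity up to equivariant isomorphism, the unit laws, and the identifications $\delta\otimes_B\widetilde\delta\cong\alpha$ (as a $(\Sigma,\Sigma)$-compatible action) and $\widetilde\delta\otimes_A\delta\cong\beta$ (as a $(\Theta,\Theta)$-compatible action); beyond what is cited to \cite{EKQR-0, EKQR}, the argument is essentially bookkeeping with internal tensor products, the only point requiring care being which $C^*$-algebra each product is taken over. That $w=(\widetilde\delta\otimes_A v)\otimes_A\delta$ is a $(\Theta,\Theta)$-compatible action on $Y=(\widetilde Z\otimes_A X)\otimes_A Z$ is immediate from the general ``product action'' construction, illustrated by the sample computation preceding the proposition: the internal tensor product of an $(\mathcal R,\mathcal S)$-compatible action with an $(\mathcal S,\mathcal T)$-compatible action is $(\mathcal R,\mathcal T)$-compatible. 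Since $\widetilde\delta$ is $(\Theta,\Sigma)$-compatible on the reverse bimodule $\widetilde Z$, $v$ is $(\Sigma,\Sigma)$-compatible on $X$, and $\delta$ is $(\Sigma,\Theta)$-compatible on $Z$, it follows that $\widetilde\delta\otimes_A v$ is $(\Theta,\Sigma)$-compatible and hence $(\widetilde\delta\otimes_A v)\otimes_A\delta$ is $(\Theta,\Theta)$-compatible.

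For the second assertion I would simply chain equivariant isomorphisms: using associativity, then the identification of $\delta\otimes_B\widetilde\delta$ on $Z\otimes_B\widetilde Z$ with $\alpha$ on ${}_AA_A$, and finally the unit laws $\alpha\otimes_A(\cdot)\cong(\cdot)\cong(\cdot)\otimes_A\alpha$, one gets
\[
\delta\otimes_B\bigl(w\otimes_B\widetilde\delta\bigr)
=\delta\otimes_B\Bigl(\bigl((\widetilde\delta\otimes_A v)\otimes_A\delta\bigr)\otimes_B\widetilde\delta\Bigr)
\cong(\delta\otimes_B\widetilde\delta)\otimes_A v\otimes_A(\delta\otimes_B\widetilde\delta)
\cong\alpha\otimes_A v\otimes_A\alpha
\cong v,
\]
acting on the corresponding chain of tensor products, which collapses accordingly to ${}_AX_A$; tracing these same isomorphisms identifies the underlying bimodule explicitly.

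For the last sentence, the assignment $[X,v]\mapsto[Y,w]$ is well defined because the product construction respects equivariant isomorphism, so equivariantly isomorphic inputs yield equivariantly isomorphic outputs. It preserves products: inserting a copy of $\delta\otimes_B\widetilde\delta\cong\alpha$ between the two middle factors gives, for $Y_i=(\widetilde Z\otimes_A X_i)\otimes_A Z$, an equivariant isomorphism
\[
Y_1\otimes_B Y_2\cong\bigl(\widetilde Z\otimes_A(X_1\otimes_A X_2)\bigr)\otimes_A Z
\]
under which $w_1\otimes_B w_2$ corresponds to $\bigl(\widetilde\delta\otimes_A(v_1\otimes_A v_2)\bigr)\otimes_A\delta$; and it sends the unit $[A,\alpha]$ to $[\widetilde Z\otimes_A Z,\widetilde\delta\otimes_A\delta]\cong[B,\beta]$. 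Its inverse is the analogous assignment $[X',v']\mapsto\bigl[(Z\otimes_B X')\otimes_B\widetilde Z,\ (\delta\otimes_B v')\otimes_B\widetilde\delta\bigr]$ built from $\Theta\sim_{(\widetilde Z,\widetilde\delta)}\Sigma$: the second assertion of the proposition says precisely that one composite of the two assignments is the identity on $\mathcal{A}(\Sigma)$, and the statement obtained by interchanging the roles of $\Sigma,\Theta$ and of $Z,\widetilde Z$ gives that the other composite is the identity on $\mathcal{A}(\Theta)$.

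The main obstacle is organizational rather than conceptual — one must keep track of the bimodule structures through every tensor product, verify that each associativity and unit isomorphism used is genuinely equivariant for the relevant compatible actions (this is where the adaptation of \cite{EKQR} does the real work), and make sure the finite chain of isomorphisms composes coherently; no coherence theorem is needed since only finitely many isomorphisms appear, but the bracketings occurring in the statement must be matched exactly. (Conceptually, the statement expresses that a Morita equivalence of systems induces an equivalence of the associated bicategories of equivariant bimodules, which in particular conjugates the monoid of self-morphisms of one unit object onto that of the other; the argument above merely unwinds this by hand.)
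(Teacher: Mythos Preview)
Your proposal is correct and follows exactly the approach the paper intends: the paper gives no explicit proof of this proposition, merely saying ``one readily gets the following result'' after listing the bullet points (associativity of the product action, the unit laws, and the identifications $\delta\otimes_B\widetilde\delta\cong\alpha$ and $\widetilde\delta\otimes_A\delta\cong\beta$), and your argument is precisely the formal bookkeeping with those ingredients that the reader is expected to supply.
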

   
 Taking into account Remark \ref{equirep-action} this result says that, up to 
isomorphism,  the equivariant representations of two Morita equivalent systems are in a one-to-one correspondence. As we will soon see, this has some relevance for the associated Fourier-Stieltjes algebras. By isomorphism of equivariant representations of a system, we mean the following.

Let $(\rho,v), (\rho',v')$ be equivariant representations  of $\Sigma$ on right Hilbert $A$-modules $X$ and $X'$, respectively. Then $(\rho,v)$ and $ (\rho',v')$ are said to be
\emph{isomorphic}
if $v$ and $v'$ are equivariantly isomorphic as $(\Sigma,\Sigma)$-compatible actions of $G$, i.e., there exists an isomorphism of right  Hilbert $A$-modules  $\phi: X \to X'$ which intertwines $v$ and $v'$, as well as  $\rho $ and $\rho'$. 
We note that in this case we have 
\begin{equation}  T_{\rho, v, x, y} = T_{\rho', v', \phi(x), \phi(y)} \end{equation}
for all $x, y \in X$. Indeed, for  each $a\in A$ and $g\in G$, we have
\begin{align*}
 T_{\rho, v, x, y} (g, a) &= \big\langle x, \rho(a)v(g) y\big\rangle = \big\langle \phi(x), \phi\big(\rho(a)v(g)y\big) \big\rangle' \\
 &= \big\langle \phi(x), \rho'(a)v'(g)\phi(y) \big\rangle' =  T_{\rho', v', \phi(x), \phi(y)} (g, a).
 \end{align*}

The following notation will be useful. If $S:G\times A \to B, T:G\times A \to A$ and $R: G\times B \to A$ are maps, then we let $S\cdot T: G\times A \to B$ and $T\cdot R: G\times B\to A$ be the maps given by 
\[ (S\cdot T)(g, a) = S(g, T(g,a)), \quad (T\cdot R)(g, b)= T(g, R(g,b))\]
for all $g\in G, a\in A$ and  $b\in B$. Moreover, we let $S\cdot T \cdot R: G\times B\to B$ be given by \[S\cdot T \cdot R:= (S\cdot T) \cdot R= S \cdot (T\cdot R).\]

\begin{proposition} \label{Morita1} Assume that the systems $\Sigma$ and $\Theta$ are  Morita equivalent with $\Sigma \sim _{(Z,\delta)} \Theta$, and  let $(\rho,v)$ be an equivariant representation of $\Sigma$ on a right Hilbert $A$-module $X$. 
Let $x, x' \in X$ and  $z, z', \zeta, \zeta' \in Z$.   
Then the map
\[S_{\delta, z', \zeta'} \cdot T_{\rho, v, x, x'} \cdot S_{\tilde{\delta}, \tilde{z}, \tilde{\zeta}}:G\times B \to B\]
belongs to $B(\Theta)$. 
Thus 
we get a linear map $F_{z, z', \zeta, \zeta'} : B(\Sigma) \to B(\Theta)$ given by 
\[ F_{z, z', \zeta, \zeta'}(T) = S_{\delta, z', \zeta'}\cdot T \cdot S_{\tilde{\delta}, \tilde{z}, \tilde{\zeta}}\]
for every $T\in B(\Sigma)$.
Similarly, the assignment  $T' \mapsto S_{\tilde{\delta}, \tilde{z}, \tilde{\zeta}}\cdot T'\cdot S_{\delta, z', \zeta'}$ gives a linear map from $B(\Theta)$ into $B(\Sigma)$. 
\end{proposition}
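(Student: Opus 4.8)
The plan is to reduce the statement to a concrete identification of the composite map $S_{\delta, z', \zeta'} \cdot T_{\rho, v, x, x'} \cdot S_{\tilde{\delta}, \tilde{z}, \tilde{\zeta}}$ as a coefficient function of an honest equivariant representation of $\Theta$, namely the one coming from the product action $w = (\widetilde{\delta}\otimes_A v)\otimes_A \delta$ on $Y = (\widetilde{Z}\otimes_A X)\otimes_A Z$ furnished by Proposition \ref{Mor-eq-action}. Concretely, writing $\pi$ for the left action of $B$ on $Y$, I expect the identity
\[
\big(S_{\delta, z', \zeta'} \cdot T_{\rho, v, x, x'} \cdot S_{\tilde{\delta}, \tilde{z}, \tilde{\zeta}}\big)(g,b) = T_{\pi, w, \, \widetilde{z}\dot\otimes x\dot\otimes \zeta',\ \widetilde{\zeta}\dot\otimes x'\dot\otimes z'}(g,b) = \big\langle \widetilde{z}\dot\otimes x\dot\otimes \zeta'\,,\, b\cdot w(g)(\widetilde{\zeta}\dot\otimes x'\dot\otimes z')\big\rangle_B
\]
to hold for all $g\in G$ and $b\in B$, up to possibly swapping or conjugating some of the module vectors; the point of the statement's generous supply of vectors $z, z', \zeta, \zeta'$ is precisely to allow the coefficient vectors in $Y$ to be arbitrary simple tensors. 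Once such an identity is verified, membership in $B(\Theta)$ is immediate since $(\pi, w)$ is an equivariant representation of $\Theta$ (Remark \ref{equirep-action}), and linearity of $F_{z, z', \zeta, \zeta'}$ in $T$ is clear from the definition of the dot-composition of maps in $L(\Sigma)$, $L(\Theta)$.

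The key steps, in order: first, unwind the definitions of $S_{\delta, z', \zeta'}$, $S_{\tilde{\delta}, \tilde{z}, \tilde{\zeta}}$ and $T_{\rho, v, x, x'}$ so that, for fixed $g$, the value $(S_{\delta, z', \zeta'} \cdot T_{\rho, v, x, x'} \cdot S_{\tilde{\delta}, \tilde{z}, \tilde{\zeta}})(g,b)$ becomes an iterated $B$-valued inner product involving $\langle \widetilde{z}, b\cdot(\widetilde{\delta}(g)\widetilde{\zeta})\rangle$ on the outside, something of the form $\langle x, \rho(\cdot)v(g)x'\rangle$ in the middle, and $\langle z', \cdot\, (\delta(g)\zeta')\rangle$ on the inside. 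Second, recall the formula for the $C$-valued (here $B$-valued) inner product on an internal tensor product $X_1\otimes_\pi X_2$, namely $\langle x_1\dot\otimes x_2, x_1'\dot\otimes x_2'\rangle = \langle x_2, \pi(\langle x_1, x_1'\rangle)x_2'\rangle$, and apply it twice to $Y = (\widetilde{Z}\otimes_A X)\otimes_A Z$; combined with the formulas for the left actions of $A$, $B$ and for $\widetilde{\delta}$ (where $\langle \widetilde{z}, \widetilde{z}'\rangle_A = {}_A\langle z', z\rangle$ and $\widetilde{\delta}(g)\widetilde{z} = \widetilde{\delta(g)z}$), the right-hand side of the target identity should unfold into exactly the same iterated inner product. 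Third, reconcile left- versus right-inner-product conventions on $\widetilde{Z}$ using the imprimitivity bimodule relation ${}_A\langle z,z'\rangle\cdot z'' = z\cdot\langle z',z''\rangle_B$ and $\|{}_A\langle z,z\rangle\| = \|\langle z,z\rangle_B\|$, adjusting the choice of coefficient vectors as needed. Finally, the ``similarly'' clause follows by the symmetric roles of $\Sigma$ and $\Theta$ together with $\Theta \sim_{(\widetilde{Z},\widetilde{\delta})}\Sigma$ and the identification of $\widetilde{\widetilde{\delta}}$ with $\delta$ (up to equivariant isomorphism).

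The main obstacle will be bookkeeping: keeping the nested internal tensor products, the three left actions, the conjugate-bimodule inner-product conventions, and the twisting cocycles $\sigma$, $\theta$ straight so that the iterated inner product on $Y$ matches the composite of the three maps on the nose — in particular getting the order of the vectors ($z$ versus $z'$, $\zeta$ versus $\zeta'$, and their conjugates) correct, since the conjugate module $\widetilde{Z}$ reverses the slots of the inner product. I would handle this by doing the computation on simple tensors and invoking continuity and the density of the linear span of simple tensors in the internal tensor product, exactly as the excerpt does in its verification of the product action. No new ideas beyond Proposition \ref{Mor-eq-action} and the standard internal-tensor-product formulas are needed; it is a matter of patiently matching both sides.
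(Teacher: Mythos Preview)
Your approach is essentially the same as the paper's: it too constructs the equivariant representation $(\tau,w)$ of $\Theta$ on $Y=(\widetilde Z\otimes_A X)\otimes_A Z$ via Proposition \ref{Mor-eq-action} and then verifies by direct computation that the composite $S_{\delta, z', \zeta'}\cdot T_{\rho, v, x, x'}\cdot S_{\tilde\delta,\tilde z,\tilde\zeta}$ equals a coefficient $T_{\tau,w,\,\cdot\,,\,\cdot}$ with simple-tensor vectors. The only slip is in your tentative choice of coefficient vectors (the correct pair is $(\tilde z\dot\otimes x)\dot\otimes z'$ and $(\tilde\zeta\dot\otimes x')\dot\otimes\zeta'$, and the outermost $B$-valued inner product is the one coming from $S_{\delta,z',\zeta'}$, not from $S_{\tilde\delta,\tilde z,\tilde\zeta}$), but you explicitly flag this as provisional and the computation you outline would sort it out.
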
  
\begin{proof}
 Let $Y= (\widetilde{Z}\otimes_AX)\otimes_A Z$ and  $w=(\widetilde{\delta}\otimes v)\otimes \delta: G \to \mathcal{I}(Y)$ be as in Proposition \ref{Mor-eq-action}, and let  $\tau:B \to \Lc_B(Y)$ denote the homomorphism coming from the left action of $B$ on $Y$, so $(\tau, w)$ is an equivariant representation of $\Theta$ on the  right Hilbert $B$-module $Y$.

Let  $g\in G$ and $b \in B$. Then we have 
\begin{equation*} \label{eq-imp-map} T_{\tau, w, (\tilde z\dot\otimes x) \dot \otimes z'\:, (\tilde\zeta\dot\otimes x') \dot \otimes \zeta'} (g, b) =
\Big\langle z' , T_{\rho, v, x, x'}\big( g, _A\big\langle z\cdot b, \delta(g)\zeta\big\rangle\big) \cdot \delta(g)\zeta'\Big\rangle_B.
\end{equation*}
Indeed, 
\begin{align*}
T_{\tau, w,  (\tilde z\dot\otimes x) \dot \otimes z', (\tilde\zeta \dot\otimes x') \dot \otimes \zeta'} &(g, b) 
=  \Big\langle (\tilde z\dot\otimes x) \dot \otimes z',\tau(b)w(g) (\tilde\zeta\dot\otimes x') \dot \otimes \zeta'\Big\rangle_B\\
&=  \Big\langle (\tilde z\dot\otimes x) \dot \otimes z', \big( ((\delta(g)\zeta)\cdot b^*)^{\widetilde{}} \dot\otimes v(g)x'\big) \dot \otimes \delta(g)\zeta'\Big\rangle_B\\
&=  \Big\langle  z', \big\langle \tilde z\dot\otimes x,((\delta(g)\zeta)\cdot b^*)^{\widetilde{}} \dot\otimes v(g)x'\big\rangle_A \cdot \delta(g)\zeta'\Big\rangle_B\\
&=  \Big\langle  z', \big\langle  x,\langle \tilde z, ((\delta(g)\zeta)\cdot b^*)^{\widetilde{}}\rangle_A \cdot v(g)x'\big\rangle_A \cdot \delta(g)\zeta'\Big\rangle_B\\
&=  \Big\langle  z', \big\langle  x, _A\langle z, (\delta(g)\zeta)\cdot b^*\rangle \cdot v(g)x'\big\rangle_A \cdot \delta(g)\zeta'\Big\rangle_B\\
&=  \Big\langle  z', \big\langle  x, \rho\big(_A\langle z\cdot b, \delta(g)\zeta\rangle\big) v(g)x'\big\rangle_A \cdot \delta(g)\zeta'\Big\rangle_B\\
&=\Big\langle z' , T_{\rho, v, x, x'}\big( g, _A\big\langle z\cdot b, \delta(g)\zeta\big\rangle\big) \cdot \delta(g)\zeta'\Big\rangle_B,
\end{align*}
as asserted.  Since
\begin{align*}
S_{\tilde{\delta}, \tilde{z}, \tilde{\zeta}} (g,b)&= \big\langle \tilde z, b \cdot (\tilde\delta(g) \tilde\zeta)\big\rangle_A = 
\big\langle \tilde z, \big((\delta(g)\zeta)\cdot b^*\big)\widetilde{}\big\rangle_A \\
&= {}_A\big\langle  z, (\delta(g)\zeta)\cdot b^*\big\rangle =  {}_A\big\langle  z\cdot b, \delta(g)\zeta\big\rangle,
\end{align*}
we get that
\begin{align*}
\big(S_{\delta, z', \zeta'} \cdot T_{\rho, v, x, x'} \cdot S_{\tilde{\delta}, \tilde{z}, \tilde{\zeta}}\big)(g,b)&= 
S_{\delta, z', \zeta'}\big(g, T_{\rho, v, x, x'}\big(g, {}_A\big\langle  z\cdot b, \delta(g)\zeta\big\rangle\big)\big)\\
&=\big\langle z' , T_{\rho, v, x, x'}\big( g, _A\big\langle z\cdot b, \delta(g)\zeta\big\rangle\big) \cdot \delta(g)\zeta'\big\rangle_B.
\end{align*}
This shows  that \[S_{\delta, z', \zeta'} \cdot T_{\rho, v, x, x'} \cdot S_{\tilde{\delta}, \tilde{z}, \tilde{\zeta}} =T_{\tau, w,  (\tilde z\dot\otimes x) \dot \otimes z', (\tilde\zeta \dot\otimes x') \dot \otimes \zeta'} \in B(\Theta)\]
and the first claim follows. The remaining claims are then easily obtained.  
\end{proof}

\begin{corollary} \label{M-eq-FS}
Assume $\Sigma$ and $\Theta$ are  Morita equivalent with $\Sigma \sim _{(Z,\delta)} \Theta$. 
Then $B(\Theta)$ can be determined from $B(\Sigma)$ and $Z$ $($and 
similarly for the other way around$)$. Indeed, we have
\begin{equation}\label{BT}
 B(\Theta) = {\rm Span} \Big\{ F_{z, z', \zeta, \zeta'}(T) \mid T\in B(\Sigma), z, z', \zeta, \zeta' \in Z\Big\}. 
\end{equation}
\end{corollary}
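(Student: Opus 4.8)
The plan is to establish the two inclusions in (\ref{BT}) separately. The inclusion $\supseteq$ is immediate from Proposition \ref{Morita1}: each map $F_{z, z', \zeta, \zeta'}(T)$ lies in $B(\Theta)$, and $B(\Theta)$ is a linear subspace of $L(\Theta)$, so it contains the linear span of all such maps.

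For the inclusion $\subseteq$, let $S \in B(\Theta)$ and write $S = T_{\tau', \eta, y, y'}$ for some equivariant representation $(\tau', \eta)$ of $\Theta$ on a right Hilbert $B$-module $W$ and some $y, y' \in W$; by Remark \ref{equirep-action} we regard $\eta$ as a $(\Theta, \Theta)$-compatible action of $G$ on ${}_B W_B$. Since $\Sigma \sim_{(Z,\delta)} \Theta$, Proposition \ref{Mor-eq-action} gives a one-to-one correspondence $[X, v] \mapsto [(\widetilde Z \otimes_A X) \otimes_A Z,\, (\widetilde\delta \otimes_A v) \otimes_A \delta]$ between $\mathcal{A}(\Sigma)$ and $\mathcal{A}(\Theta)$; hence there is an equivariant representation $(\rho, v)$ of $\Sigma$ on a right Hilbert $A$-module $X$ for which the equivariant representation $(\tau, w)$ of $\Theta$ on $Y := (\widetilde Z \otimes_A X) \otimes_A Z$ associated with $(\rho, v)$ as in Proposition \ref{Morita1} is isomorphic to $(\tau', \eta)$, say via an isomorphism $\psi : Y \to W$. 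Since isomorphic equivariant representations have the same coefficient functions (the displayed identity preceding the statement of Proposition \ref{Morita1}), we obtain $S = T_{\tau, w, \psi^{-1}(y), \psi^{-1}(y')}$.

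It now suffices to rewrite $T_{\tau, w, \psi^{-1}(y), \psi^{-1}(y')}$ as a finite linear combination of maps $F_{z, z', \zeta, \zeta'}(T)$ with $T \in B(\Sigma)$. The point is that, ${}_A Z_B$ being an imprimitivity bimodule between unital $C^*$-algebras, $Z$ is finitely generated projective both as a right Hilbert $B$-module and, via $\widetilde Z$, as a left $A$-module; fixing finite frames accordingly, one may reduce first the inner internal tensor product $\widetilde Z \otimes_A X$ and then the outer tensoring over $A$ to a fixed finite set of generators, and as the relevant coordinate maps are bounded this reduction survives completion. Consequently every element of $Y$ --- in particular $\psi^{-1}(y)$ and $\psi^{-1}(y')$ --- is a \emph{finite} sum of simple tensors $(\widetilde z \dot\otimes x) \dot\otimes z'$ with $\widetilde z \in \widetilde Z$, $x \in X$, $z' \in Z$. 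Expanding $\psi^{-1}(y)$ and $\psi^{-1}(y')$ in this way, using additivity of $(\xi, \xi') \mapsto T_{\tau, w, \xi, \xi'}$ in each argument, and invoking the identity obtained in the proof of Proposition \ref{Morita1}, namely $T_{\tau, w, (\widetilde z \dot\otimes x)\dot\otimes z',\, (\widetilde\zeta \dot\otimes x')\dot\otimes \zeta'} = F_{z, z', \zeta, \zeta'}\big(T_{\rho, v, x, x'}\big)$ with $T_{\rho, v, x, x'} \in B(\Sigma)$, one concludes that $S$ lies in the span on the right of (\ref{BT}). The parenthetical claim, that $B(\Sigma)$ is recovered from $B(\Theta)$ and $\widetilde Z$, follows by the symmetric argument, interchanging $\Sigma$ with $\Theta$ and $(Z, \delta)$ with $(\widetilde Z, \widetilde\delta)$.

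I expect the real obstacle to be exactly the finite-sum decomposition of an arbitrary element of the completed internal tensor product $Y$: this is what produces an honest equality in (\ref{BT}), rather than merely a density statement, and it relies on the finite generation and projectivity of the imprimitivity bimodule over the unital algebras $A$ and $B$ together with boundedness of the frame coordinate maps for internal tensor products; the remainder is bookkeeping with Propositions \ref{Mor-eq-action} and \ref{Morita1} and Remark \ref{equirep-action}.
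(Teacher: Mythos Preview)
Your proof is correct, but it takes a genuinely different route from the paper's. You invoke Proposition \ref{Mor-eq-action} to realize an arbitrary $S\in B(\Theta)$ as a coefficient of the induced representation $(\tau,w)$ on $Y=(\widetilde Z\otimes_A X)\otimes_A Z$, and then appeal to finite generation of the imprimitivity bimodule to write the two vectors in $Y$ as finite sums of simple tensors, after which the identity from the proof of Proposition \ref{Morita1} finishes the job. The paper instead never passes through Proposition \ref{Mor-eq-action}: it computes directly that $(S_{\delta,z',\zeta'}\cdot S_{\tilde\delta,\tilde z,\tilde\zeta})(g,b)=\langle z,z'\rangle_B^{*}\,b\,\beta_g(\langle\zeta,\zeta'\rangle_B)$, chooses finitely many $z_i,z_i'\in Z$ with $\sum_i\langle z_i,z_i'\rangle_B=1_B$, obtains $\sum_{i,j} S_{\delta,z_i',z_j'}\cdot S_{\tilde\delta,\tilde z_i,\tilde z_j}=I_\Theta$, and then sandwiches any $T'\in B(\Theta)$ as $T'=I_\Theta\cdot T'\cdot I_\Theta=\sum_{i,j,k,l}F_{z_k,z_i',z_l,z_j'}(T'_{i,j,k,l})$ with $T'_{i,j,k,l}:=S_{\tilde\delta,\tilde z_i,\tilde z_j}\cdot T'\cdot S_{\delta,z_k',z_l'}\in B(\Sigma)$ by Proposition \ref{Morita1}.

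What each approach buys: the paper's argument is shorter and entirely elementary---it needs only fullness of $Z$ as a right $B$-module (to write $1_B$ as a finite sum of inner products) and the second half of Proposition \ref{Morita1}. Your argument is more structural and makes explicit use of the representation-theoretic bijection, but its honest cost is the claim that every element of the completed internal tensor product $Y$ is a \emph{finite} sum of simple tensors. That claim is true here, and your instinct about why is right: a finite right $A$-frame for $\widetilde Z$ handles $\widetilde Z\otimes_A X$, and a finite \emph{left} $A$-frame $\{z_j\}$ for $Z$ handles $-\otimes_A Z$; the coordinate maps $\xi\mapsto \xi\dot\otimes\tilde z_j$ into $(X\otimes_A Z)\otimes_B\widetilde Z\cong X$ are bounded, which is what makes the frame expansion survive completion. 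You gesture at this (``the relevant coordinate maps are bounded''), but a reader might appreciate one explicit sentence explaining why, since the modules involved live over different algebras and the maps are not adjointable in the naive sense.
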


\begin{proof} 
Using Proposition \ref{Morita1} we get that  the right-hand side of (\ref{BT}) is contained in $B(\Theta)$.   To show the reverse inclusion, we first observe that for $z, z', \zeta, \zeta' \in Z$, $g \in G$ and $b\in B$ we have
\begin{align*}
\big(S_{\delta, z', \zeta'}  \cdot S_{\tilde{\delta}, \tilde{z}, \tilde{\zeta}}\big)(g,b)&= 
S_{\delta, z', \zeta'} \big(g, S_{\tilde{\delta}, \tilde{z}, \tilde{\zeta}}(g,b)\big) \\
&=S_{\delta, z', \zeta'} \big(g, {}_A\big\langle  z\cdot b, \delta(g)\zeta\big\rangle\big)\\
&= \big\langle  z', {}_A\big\langle  z\cdot b, \delta(g)\zeta\big\rangle\cdot (\delta(g)\zeta') \big\rangle_B\\
&= \big\langle  z',   z\cdot b \cdot \big\langle \delta(g)\zeta, \delta(g)\zeta' \big\rangle_B\big\rangle_B\\
&=  \big\langle  z',   z\big\rangle_B  b   \big\langle \delta(g)\zeta, \delta(g)\zeta' \big\rangle_B\\
&=  \big\langle  z,   z'\big\rangle_B^{ *}   b   \beta_g\big(\langle \zeta, \zeta' \rangle_B\big)
\end{align*}
Now, since $Z$ is full as a right Hilbert $B$-module, we can use  Lemma 2.5 in \cite{BCL} to find $z_1, z'_1, \ldots, z_n, z'_n\in Z$ such that \begin{equation} \label{unit} 
\sum_{i=1}^n \langle z_i, z'_i\rangle_B = 1_B\quad \text{(the unit of $B$)}.
\end{equation}
(In fact, proceeding as in \cite[p. 90]{GBVF}, one may even choose $z'_j= z_j$ for all $j=1, \ldots, n$, but we won't need this).
We note that 
\begin{equation}\label{Identity} \sum_{i, j=1}^n F_{z_i, z_i', z_j, z_j'}(I_\Sigma) =  \sum_{i, j=1}^n S_{\delta, z_i', z_j'}  \cdot S_{\tilde{\delta}, \tilde{z_i}, \tilde{z_j}} = I_\Theta.
\end{equation}
Indeed, for $g\in G$ and $b\in B$, using (\ref{unit}), we get
\begin{align*}
\Big(\sum_{i, j=1}^n S_{\delta, z_i', z_j'}  \cdot S_{\tilde{\delta}, \tilde{z_i}, \tilde{z_j}}\Big)(g,b) &= 
\sum_{i, j=1}^n \big\langle  z_i,   z_i'\big\rangle_B^{ *}   b   \beta_g\big(\langle z_j, z_j' \rangle_B\big)\\
&= \big(\sum_{i=1}^n \big\langle  z_i,   z_i'\big\rangle_B\big)^{*}   b   \beta_g\big(\sum_{j=1}^n \langle z_j, z_j' \rangle_B\big)= b.
\end{align*}
Let $T'\in B(\Theta)$. For each $i,j,k,l \in \{1, \ldots, n\}$, set 
\[T'_{i,j,k, l} := S_{\tilde{\delta}, \tilde{z_i}, \tilde{z_j}}\cdot T'\cdot S_{\delta, z_k', z_l'},\]
which belongs to $B(\Sigma)$ (by Proposition \ref{Morita1}). Then, using (\ref{Identity}), we get that 
\begin{align*} \sum_{i,j,k,l=1}^n F_{z_k, z_i', z_l, z_j'} & \big(T'_{i,j,k,l}\big) =  \sum_{i,j,k,l}^n  S_{\delta, z_i', z_j'}  \cdot S_{\tilde{\delta}, \tilde{z_i}, \tilde{z_j}}\cdot T'\cdot S_{\delta, z_k', z_l'} \cdot S_{\tilde{\delta}, \tilde{z_k}, \tilde{z_l}}\\
&=\Big(\sum_{i,j=1}^n  S_{\delta, z_i', z_j'}  \cdot S_{\tilde{\delta}, \tilde{z_i}, \tilde{z_j}}\Big) \cdot T'\cdot \Big(\sum_{k,l=1}^n S_{\delta, z_k', z_l'} \cdot  S_{\tilde{\delta}, \tilde{z_k}, \tilde{z_l}}\Big)\\
&= I_\Theta \cdot T' \cdot I_\Theta = T',
\end{align*}
which shows that $T' \in {\rm Span} \Big\{ F_{z, z', \zeta, \zeta'}(T) \mid T\in B(\Sigma), z, z', \zeta, \zeta' \in Z\Big\}$, as desired.
\end{proof}

In view of the last statement of Corollary \ref{com-M-eq}, one might wonder under which assumptions the Fourier-Stieltjes algebras associated to Morita equivalent systems are actually (isometrically) isomorphic, cf.~Theorem \ref{coc-conj-isom} (see also Remark \ref{triv}).
 Also, it would be interesting to investigate whether in general those Fourier-Stieltjes algebras could be Morita equivalent as Banach algebras in some suitable sense (see e.g.~\cite{Gro} or \cite{Par}). 
However, elaborating on this topic would require the development of additional machinery, and we won't discuss this here.

\begin{remark}\label{triv} It may be worth to point out that in general Morita equivalence of systems is not sufficient to ensure that the associated Fourier-Stieltjes algebras are isomorphic. Indeed, consider $\Sigma = (\Complessi, G, {\rm triv}, 1)$ and $\Theta = (M_2(\Complessi), G, {\rm triv}, 1)$ for some discrete group $G$ (where ${\rm triv}$ denotes the trivial action in both cases). It is then easy to see that $\Sigma$ and $\Theta$ are Morita equivalent. On the other hand, $B(\Sigma) = B(G)$ is commutative, while $B(\Theta)$ is not as it contains a copy of $M_2(\Complessi)$.

\end{remark} 

\begin{remark} \label{weak ME} Consider two systems $\Sigma=(A,G,\alpha,\sigma)$ and $\Theta=(B,H,\beta,\theta)$ where $H$ might be different from $G$, as in the previous section. If $\varphi: G\to H$ is an isomorphism, we obtain a new system $\Theta^\varphi= (B, G, \beta^\varphi, \theta^\varphi)$ by setting $\beta^\varphi_g = \beta_{\varphi(g)}$ and $\theta^\varphi(g, g') = \theta(\varphi(g), \varphi(g'))$. One easily checks that $B(\Theta)$ is isometrically isomorphic to $B(\Theta^\varphi)$.  Now, let us  say that $\Sigma$ and $\Theta$ are 
\emph{weakly Morita equivalent} if 
there exist some $\omega \in Z^2(G, \Toro)$ and some isomorphism $\varphi: G\to H$
such that 
$\Sigma(\omega)$ is Morita equivalent to $\Theta^\varphi$.  Corollary \ref{M-eq-FS} gives then that $B(\Sigma) = B(\Sigma(\omega))$ can be determined from $B(\Theta^\varphi)$, hence from $B(\Theta)$, and vice-versa. Finally, we mention that  
 $\Sigma$ and $\Theta$ are 
 weakly Morita equivalent 
  whenever they are cocycle group conjugate, as the reader will easily verify.
\end{remark}

\section{An application to amenable systems} 
Amenability is an important topic within operator algebras, and it has received a good deal of attention, also in connection with $C^*$-dynamical systems (see e.g.~\cite{AD1, AD2, Ex, ExNg, BrOz, BeCo3, Ex4, BeCo6, MSTT, BEW, ABF2, BEW2} and references therein).
Using the technique used in the proof of  Corollary \ref{M-eq-FS}, we will show that amenability of a system, as defined in \cite{BeCo6}, is preserved under Morita equivalence.  
As before, we let $\Sigma=(A,G,\alpha,\sigma)$ and $\Theta=(B,G,\beta,\theta)$ denote two twisted unital discrete $C^*$-dynamical systems. 
We recall that $\Sigma$ is said to be 
\emph{amenable} whenever there exists a net $\{T^{\nu}\} $ in $P(\Sigma)$ such that 
\begin{itemize}
\item each $T^{\nu}$ is finitely supported, i.e., the set $\{ g\in G \mid T^{\nu}_g \neq 0\}$ is finite for each $\nu$,
\item $\{T^{\nu}\} $ is uniformly bounded, i.e., $\sup_{\nu} \|T^{{\nu}}\|_\infty 
< \infty,$
\item $\lim_{\nu} \|T^{\nu}_g(a) - a\| = 0$ for every $g \in G$ and $a\in A$. 
\end{itemize}

Assume for example that  $\Sigma$ has  \emph{Exel's $($positive$)$ approximation property} \cite{Ex, Ex4, ExNg}, 
that is,  there exists a net  $\{\xi_\nu\}$ of finitely supported functions from $G$ into $A$ such that  
\begin{itemize}
\item[(a)] $\sup_\nu \big\| \sum_{g\in G}  \xi_\nu(g)^*\xi_\nu(g)\big\| < \infty$;

\item[(b)] 
$\lim_\nu \big\|  \sum_{h\in G}  \xi_\nu(h)^*a\alpha_g\big(\xi_\nu(g^{-1}h)\big)  - a\big\| = 0
$ for all $g \in G$ and $a \in A$.
\end{itemize}
Then $\Sigma$ is amenable because  setting $T^\nu_g(a) = \sum_{h\in G}  \xi_\nu(h)^*a\alpha_g\big(\xi_\nu(g^{-1}h)\big)$ for all $g\in G$ and $a\in A$
gives a net $\{T^\nu\}$ satisfying the required properties.  
Note that if all $\xi_\nu$'s take their values in  $Z(A)$, then (b) is equivalent to  
\[\lim_\nu \big\|  \sum_{h\in G}  \xi_\nu(h)^*\alpha_g\big(\xi_\nu(g^{-1}h)\big)  - 1_A\big\| = 0\]
 for all $g \in G$. Thus it readily follows that if $\sigma =1$, then $\Sigma$ is amenable whenever the action $\alpha$ is amenable in the sense of \cite{BrOz}, a notion that is stronger than Anantharaman-Delaroche's original definition of amenability of $\alpha$ in \cite{AD1}. 
Notice also that as long as $\sigma$ is scalar-valued then the amenability of $\Sigma$ does not depend on $\sigma$.  
As shown in \cite[Theorem 4.6]{BeCo6}, amenability of $\Sigma$ implies that $\Sigma$ is \emph{regular}, i.e., the full and the reduced  $C^*$-crossed products associated to $\Sigma$ are canonically isomorphic. 
Several other notions of amenability (for untwisted systems) are discussed in \cite{BEW, BEW2}. We note that if $A$ is commutative, $G$ is exact and $\sigma =1$, then it follows readily from \cite[Theorem 5.2]{BEW} that all existing notions of amenability for $\Sigma$ (including ours, and regularity) are equivalent.
  
 Strong and weak equivalence of Fell bundles over groups are studied in \cite{AF, ABF1, ABF2}. Having in mind that $\Sigma$ gives rise to a Fell bundle over $G$ in a canonical way (cf.~\cite{Ex3}), one may for instance deduce from \cite[Corollary 4.5]{ABF1} and  \cite[Theorem 6.23]{ABF2} that regularity and Exel's approximation property are preserved under Morita equivalence of systems. We prove below that this is also true for amenability in our sense. 

\begin{theorem}\label{Amen}  Assume that the systems $\Sigma$ and $\Theta$ 
are  Morita equivalent, with $\Sigma \sim _{(Z,\delta)} \Theta$.  Then  $\Theta$ is amenable whenever  $\Sigma$ is amenable.
 \end{theorem}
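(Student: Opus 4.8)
The plan is to transport an amenability net for $\Sigma$ to one for $\Theta$ using the maps $F_{z,z',\zeta,\zeta'}$ from Proposition \ref{Morita1}, exactly in the spirit of the proof of Corollary \ref{M-eq-FS}. Suppose $\Sigma \sim_{(Z,\delta)} \Theta$ and let $\{T^\nu\}$ be a net in $P(\Sigma)$ witnessing amenability of $\Sigma$: finitely supported, uniformly bounded in $\|\cdot\|_\infty$, and with $T^\nu_g(a)\to a$ for all $g\in G$, $a\in A$. Fix, as in the proof of Corollary \ref{M-eq-FS}, elements $z_1,z_1',\dots,z_n,z_n'\in Z$ with $\sum_{i=1}^n\langle z_i,z_i'\rangle_B=1_B$; by the remark there (proceeding as in \cite[p.~90]{GBVF}) we may take $z_i'=z_i$, which will make positivity bookkeeping cleaner. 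I would then define
\[
S^\nu := \sum_{i,j=1}^n F_{z_i,z_i,z_j,z_j}(T^\nu) = \sum_{i,j=1}^n S_{\delta,z_i,z_j}\cdot T^\nu\cdot S_{\widetilde\delta,\widetilde{z_i},\widetilde{z_j}} \in B(\Theta)
\]
and check that $\{S^\nu\}$ is an amenability net for $\Theta$.

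The three properties have to be verified in turn. For finite support: from the formula $(S_{\delta,z',\zeta'}\cdot T\cdot S_{\widetilde\delta,\widetilde z,\widetilde\zeta})(g,b)=\langle z', T_g(\,_A\langle z\cdot b,\delta(g)\zeta\rangle)\cdot\delta(g)\zeta'\rangle_B$ derived in the proof of Proposition \ref{Morita1}, one sees that $(S^\nu)_g=0$ whenever $T^\nu_g=0$, so $S^\nu$ is finitely supported. For uniform boundedness in $\|\cdot\|_\infty$: since $T^\nu\in P(\Sigma)$, write $T^\nu=T_{\rho_\nu,v_\nu,x_\nu,x_\nu}$ with $\|\langle x_\nu,x_\nu\rangle_A\|=\|T^\nu\|_\infty$; then, as in the proof of Proposition \ref{Morita1}, $S^\nu = T_{\tau_\nu,w_\nu,\Xi_\nu,\Xi_\nu}$ where $\Xi_\nu=\sum_{i}(\widetilde{z_i}\dot\otimes x_\nu)\dot\otimes z_i\in Y_\nu=(\widetilde Z\otimes_A X_\nu)\otimes_A Z$ — note that choosing $z_i'=z_i$ is what forces the two slots of the coefficient to coincide, so $S^\nu\in P(\Theta)$ and $\|S^\nu\|_\infty=\|\langle\Xi_\nu,\Xi_\nu\rangle_B\|$. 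A direct computation of $\langle\Xi_\nu,\Xi_\nu\rangle_B$ unwinding the internal tensor product inner products expresses it in terms of $\langle z_i,\rho_\nu(\,_A\langle z_i\cdot\langle z_i,z_i\rangle_B^{*}\cdots\rangle)\cdots\rangle$-type quantities bounded by a fixed constant times $\sup_\nu\|\langle x_\nu,x_\nu\rangle_A\|=\sup_\nu\|T^\nu\|_\infty<\infty$ (the constant depending only on the fixed finite family $\{z_i\}$ and on $\|1_B\|=1$); hence $\sup_\nu\|S^\nu\|_\infty<\infty$. For the approximation property: using $\sum_i\langle z_i,z_i\rangle_B=1_B$ and the identity $\sum_{i,j}S_{\delta,z_i,z_j}\cdot S_{\widetilde\delta,\widetilde{z_i},\widetilde{z_j}}=I_\Theta$ from (\ref{Identity}), write
\[
S^\nu_g(b)-b = \sum_{i,j=1}^n \Big\langle z_i,\ \big(T^\nu_g(\,_A\langle z_j\cdot b,\delta(g)z_i\rangle) - \,_A\langle z_j\cdot b,\delta(g)z_i\rangle\big)\cdot\delta(g)z_j\Big\rangle_B,
\]
and estimate the norm by $\sum_{i,j}\|z_i\|\,\|\delta(g)z_j\|\,\|T^\nu_g(c_{i,j,g,b})-c_{i,j,g,b}\|$ where $c_{i,j,g,b}=\,_A\langle z_j\cdot b,\delta(g)z_i\rangle\in A$; since $g$ is fixed and the sum is finite, each term goes to $0$ by the third defining property of $\{T^\nu\}$, so $\|S^\nu_g(b)-b\|\to 0$.

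The main obstacle I anticipate is the uniform-boundedness step: one must genuinely compute $\|\langle\Xi_\nu,\Xi_\nu\rangle_B\|$ through the two nested internal tensor products $(\widetilde Z\otimes_A X_\nu)\otimes_A Z$ and extract a bound independent of $\nu$, which requires using the properties of $(\rho_\nu,v_\nu)$ and the imprimitivity-bimodule identity $_A\langle z,z'\rangle\cdot z''=z\cdot\langle z',z''\rangle_B$ carefully, together with the fact that $\|\rho_\nu(\,_A\langle z,z\rangle)\|\le\|\,_A\langle z,z\rangle\|$ is not quite enough — one needs the positivity of $T^\nu$ so that the relevant matrix $[\,_A\langle z_i,z_j\rangle\,]$-weighted expression is dominated by $\|\langle x_\nu,x_\nu\rangle_A\|$ times a constant. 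An alternative, and probably cleaner, route is to avoid coefficient gymnastics entirely: observe that $S^\nu=T_{\tau_\nu,w_\nu,\Xi_\nu,\Xi_\nu}\in P(\Theta)$, so by \cite[Corollary 4.3]{BeCo6} $\|S^\nu\|_\infty=\|(S^\nu)_e(1_B)\|$, and then $(S^\nu)_e(1_B)=\sum_{i,j}\langle z_i,z_i\rangle_B^{*}\,T^\nu_e(\langle z_j,z_j\rangle_B)\cdots$ — wait, more precisely $(S^\nu)_e(b)=\sum_{i,j}\langle z_i,T^\nu_e(\,_A\langle z_j\cdot b,z_i\rangle)\cdot z_j\rangle_B$, and since $\|T^\nu_e\|\le\|T^\nu\|_\infty$ one gets directly $\|(S^\nu)_e(1_B)\|\le (\sum_{i}\|z_i\|^2)^2\,\|T^\nu\|_\infty$, giving the uniform bound with no tensor-product computation at all. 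I would present the argument this way. Finally, to conclude, note $\{S^\nu\}\subseteq P(\Theta)$ is finitely supported, uniformly bounded in $\|\cdot\|_\infty$, and satisfies $S^\nu_g(b)\to b$ for all $g\in G$, $b\in B$; hence $\Theta$ is amenable, which completes the proof.
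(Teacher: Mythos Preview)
Your proposal is correct and follows essentially the same approach as the paper: transport the amenability net via $F=\sum_{i,j}F_{z_i,z_i',z_j,z_j'}$, obtain $\Theta$-positive definiteness from the coefficient identity $F(T_{\rho,v,x,x})=T_{\tau,w,y,y}$ with $y=\sum_i(\tilde z_i\dot\otimes x)\dot\otimes z_i'$, and get convergence from~(\ref{Identity}). The only point where you diverge is the uniform-boundedness step, which you flag as the ``main obstacle'' but which the paper handles in one line: since $\|F(T^\nu)\|_\infty=\|y\|^2$ and $\|y\|\le\sum_i\|(\tilde z_i\dot\otimes x_\nu)\dot\otimes z_i'\|\le\sum_i\|z_i\|\,\|x_\nu\|\,\|z_i'\|$, one gets $\|F(T^\nu)\|_\infty\le K\,\|T^\nu\|_\infty$ with $K=(\sum_i\|z_i\|\,\|z_i'\|)^2$ immediately, with no need to unwind $\langle\Xi_\nu,\Xi_\nu\rangle_B$ or evaluate at $(e,1_B)$. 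Your choice $z_i'=z_i$ is harmless but unnecessary; the diagonal form $T_{\tau,w,y,y}$ already arises from the double sum over $i,j$ regardless.
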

 \begin{proof}  Assume that 
 $\Sigma$ is amenable.
 As in the proof of Proposition \ref{Morita1}, we can
 find $z_1, z'_1, \ldots, z_n, z'_n\in Z$ such that \[\sum_{i=1}^n \langle z_i, z'_i\rangle_B = 1_B.\] For later use, we  set $K= \big(\sum_{i=1}^n \|z_i\|  \|z_i'\|)^2$. Let then $F:B(\Sigma) \to B(\Theta)$ be the linear map given  by 
 \[ F=\sum_{i,j=1}^n F_{z_i, z'_i, z_j, z'_j}.\]
 We first note that $F$ maps $P(\Sigma) $ into $P(\Theta)$. To show this, we use the notation introduced in the proof of Proposition \ref{Morita1}. 
Let $T= T_{\rho, v, x, x} \in P(\Sigma)$ and set $y:= \sum_{i=1}^n(\tilde z_i\dot\otimes x) \dot \otimes z'_i\in Y:= (\tilde Z \otimes_A X) \otimes_A Z$. Then we have 
 \begin{align*} F(T) &= \sum_{i,j=1}^n F_{z_i, z'_i, z_j, z'_j}(T_{\rho, v, x, x}) = 
 \sum_{i,j=1}^n T_{\tau, w, (\tilde z_i\dot\otimes x) \dot \otimes z'_i, (\tilde z_j\dot\otimes x) \dot \otimes z'_j}\\& = \sum_{j=1}^n T_{\tau, w, y, (\tilde z_j\dot\otimes x) \dot \otimes z'_j} =  T_{\tau, w, y, y} \in P(\Theta).
\end{align*}
This computation also gives that 
\begin{align*} 
\|F(T)\|_\infty &= \| \langle y, y\rangle_B\| = \|y\|^2 \leq \Big( \sum_{i=1}^n \| (\tilde z_i\dot\otimes x) \dot \otimes z'_i\|\Big)^2 \\
&\leq \Big( \sum_{i=1}^n \|\tilde z_i\|\| x\|\| z'_i\|\Big)^2 = K  \|x\|^2 = K  \|T\|_\infty
\end{align*}
(since $\|\tilde{z}\| = \|z\|$ for all $z\in Z$).
Further,  we note  that $F(T)$ is easily seen to be finitely supported whenever $T\in B(\Sigma)$ is finitely supported. 

Let now $\{T^{\nu}\}$ be a net in $P(\Sigma)$  witnessing the amenability of $\Sigma$. Then $\{F(T^{\nu})\}$ is clearly a net of finitely supported elements in $P(\Theta)$. Moreover, we have that
\[\sup_\nu \|F(T^{\nu})\|_\infty  \leq  K \sup_\nu \|T^{\nu}\|_\infty <  \infty, \]
so  $\{F(T^{\nu})\}$ is uniformly bounded. Finally, let $g \in G$ and $b\in B$. Then 
\begin{align*}
F(T^{\nu})(g, b) &= \sum_{i,j=1}^n \big(F_{z_i, z'_i, z_j, z'_j}(T^{\nu})\big)(g,b) 
=  \sum_{i,j=1}^n \big(S_{\delta, z_i', z_j'}  \cdot T^{\nu} \cdot S_{\tilde{\delta}, \tilde{z_i}, \tilde{z_j}}\big)(g,b)\\
&=  \sum_{i,j=1}^n S_{\delta, z_i', z_j'}\big(g, T^{\nu}_g\big(S_{\tilde{\delta}, \tilde{z_i}, \tilde{z_j}}(g,b)\big)\big).
\end{align*} 
Consider now $i,j\in \{1, \ldots, n\}$. Using that the map $a \to  S_{\delta, z_i', z_j'}\big(g, a\big)$ from $A$ into $B$ is continuous, we get that
\begin{align*} \lim_\nu  S_{\delta, z_i', z_j'}\big(g, T^{\nu}_g\big(S_{\tilde{\delta}, \tilde{z_i}, \tilde{z_j}}(g,b)\big)\big) 
&=  S_{\delta, z_i', z_j'}\big(g, \lim_\nu T^{\nu}_g\big(S_{\tilde{\delta}, \tilde{z_i}, \tilde{z_j}}(g,b)\big)\big)\\
&=S_{\delta, z_i', z_j'}\big(g, \big(S_{\tilde{\delta}, \tilde{z_i}, \tilde{z_j}}(g,b)\big)\big)\\
&= \big(S_{\delta, z_i', z_j'} \cdot  S_{\tilde{\delta}, \tilde{z_i}, \tilde{z_j}}\big)(g,b).
\end{align*}
Hence, using Equation \eqref{Identity}, we get that 
\begin{align*} \lim_\nu F(T^{\nu})(g, b) &= \sum_{i,j=1}^n \lim_\nu S_{\delta, z_i', z_j'}\big(g, T^{\nu}_g\big(S_{\tilde{\delta}, \tilde{z_i}, \tilde{z_j}}(g,b)\big)\big)\\
&= \sum_{i,j=1}^n \big(S_{\delta, z_i', z_j'} \cdot  S_{\tilde{\delta}, \tilde{z_i}, \tilde{z_j}}\big)(g,b) = I_{\Theta}(b) = b.
\end{align*}
This shows that $\Theta$ is amenable, as desired. 
\end{proof} 

An immediate consequence of this result is that amenability of a system is also preserved under weak Morita equivalence (as defined in Remark \ref{weak ME}).  
\begin{remark} 
A result of a nature similar to Theorem \ref{Amen} is Theorem 2.2.17 in \cite{ADR}, which says that topological amenability of locally compact groupoids is invariant under topological equivalence (whose definition is hinted by Morita equivalence of $C^*$-algebras). 
\end{remark}

\subsection*{Acknowledgements}
Most of the present work has been done during the visits
made by E.B. at the Sapienza University of Rome 
and by R.C. at the University of Oslo in the period 2018-2019. 
Both the authors are grateful to the hosting institutions for the kind hospitality,
and to the Trond Mohn Foundation (TMS) for financial support. 
We are also indebted to the referees for their many helpful comments and suggestions, 
leading to an improved version of this article.

\end{document}